\documentclass[reqno,final,11pt]{amsart}
%% Packages
\usepackage{natbib}  %Nature-like bibliography
\usepackage{fancyhdr} %Headers
\usepackage{color} %Color definition
\usepackage{hyperref} %Internal and external links
\usepackage{graphicx} %Graphics inclusion
\usepackage{geometry}
\geometry{
  top=1in,
  inner=1in,
  outer=1in,
  bottom=1in,
  headheight=3ex,
  headsep=2ex,
}
\usepackage[utf8]{inputenc}
\usepackage[T1]{fontenc}
\usepackage{verbatim}

%%%%%%%%%%%%%%%%%%%%%%%%%%%%%%%%%%%%%
%% Author packages
%%%%%%%%%%%%%%%%%%%%%%%%%%%%%%%%%%%%%
\usepackage{tikz}
\usetikzlibrary{calc,shapes,backgrounds,arrows}
\usepackage{amssymb}

%%%%%%%%%%%%%%%%%%%%%%%%%%%%%%%%%%%%%

%% Definition of colors for links
\definecolor{aleacolor}{rgb}{0.16,0.59,0.78}
%% Settings for hyperref package\usepackage{color}
\hypersetup{
breaklinks,
colorlinks=true,
linkcolor=aleacolor,
urlcolor=aleacolor,
citecolor=aleacolor}

%% Settings for fancyhdr package
%\renewcommand{\headheight}{11pt}
\pagestyle{fancy} \fancyhf{} \fancyhead[RO,LE]{\small\thepage}
\fancyhead[RE]{\small\shortauthors} \fancyhead[LO]{\small\shorttitle}

%% Settings for natbib package
\renewcommand{\cite}{\citet}

%% Setting the theorem-like environments
\theoremstyle{plain}
\newtheorem{theorem}{Theorem}[section]

\newtheorem{lemma}[theorem]{Lemma}

\theoremstyle{definition}

\theoremstyle{remark}
\newtheorem{remark}[theorem]{Remark}

%% Numbering
\makeatletter \@addtoreset{equation}{section} \makeatother

%%%%%%%%%%%%%%%%%%%%%%%%%%%%%%%%%%%%%
%% Author commands and definitions
%%%%%%%%%%%%%%%%%%%%%%%%%%%%%%%%%%%%%

%%%%%%%%%%%%%%%%%%%%%%%%%%%%%%%%%%%%%

\newcommand{\N}{\mathbb{N}}

\newcommand{\R}{\mathbb{R}}

\newcommand{\calU}{\mathcal{U}}

\newcommand{\ind}[1]{\mathbbm{1}_{\left\{#1\right\}}}
\newcommand{\indset}[1]{\mathbbm{1}_{#1}}

\newcommand{\ceil}[1]{{\left\lceil #1 \right\rceil}}

\DeclareMathOperator{\E}{\mathbf{E}}
\renewcommand{\P}{\operatorname{\mathbf{P}}}

\newcommand{\crochet}[1]{{\langle #1 \rangle}}
\newcommand{\e}{\mathrm{e}}

\newcommand{\dd}{\mathrm{d}}

\renewcommand{\bar}[1]{\overline{#1}}
\renewcommand{\tilde}[1]{\widetilde{#1}}
\renewcommand{\hat}[1]{\widehat{#1}}
\renewcommand{\epsilon}{\varepsilon}
\renewcommand{\phi}{\varphi}

\usepackage{tcolorbox}
\tcbuselibrary{skins,breakable}

\newtcolorbox{pubnote}{
  colback=white,
  colframe=black!60,
  boxrule=0.5pt,
  arc=3pt,
  left=6pt,
  right=6pt,
  top=6pt,
  bottom=6pt,
  breakable
}

%%%%%%%%%%%%%%%%%%%%%%%%%%%%%%%%%%%%%
\usepackage{orcidlink}
\usepackage{lmodern} 
\usepackage{array}
\usepackage{subfig}
\usepackage{amssymb, amsfonts, amsmath,amsthm,bbm}
\usepackage{enumerate}

\usepackage{doi}

%%%%%%%%%%%%%%%%%%%%%%%%%%%%%%%%%%%%%

\begin{document} 

\title[Extremal Process of Last Progeny Modified Branching Random Walks]{Extremal Process of Last Progeny Modified\\ Branching Random Walks\footnote{
\begin{center}
\begin{pubnote}
\small
This is the Author Accepted Manuscript of the paper published in
\emph{ALEA  Latin American Journal of Probability and Mathematical Statistics}
23: Page 25--42 (2026).
The final published version differs in pagination.
\doi{10.30757/ALEA.v23-02}.
\end{pubnote}\end{center}
}}

\author[Partha Pratim Ghosh]{Partha Pratim Ghosh \orcidlink{0000-0002-4801-4538}}
\address{Institut für Mathematische Stochastik,
Technische Universität Braunschweig,
Universitätsplatz 2,
38106 Braunschweig,
Germany.}
\email{p.pratim.10.93@gmail.com}
\urladdr{\href{https://sites.google.com/view/parthapratim}{https://sites.google.com/view/parthapratim}}

\author[Bastien Mallein]{Bastien Mallein \orcidlink{0000-0002-9435-420X}}
\address{Institut de Mathématiques de Toulouse,
Université Toulouse III Paul Sabatier,
118 route de Narbonne,
31062 Toulouse Cedex 9, France.}
\email{bastien.mallein@math.univ-toulouse.fr}
\urladdr{\href{https://www.math.univ-toulouse.fr/~bmallein/}{https://www.math.univ-toulouse.fr/$\sim$bmallein}}

\subjclass[2010]{Primary: 60G55, 60G70, 60J80; Secondary: 60G42, 60G50.}
\keywords{Branching random walk; point process; extremal process; additive martingales; derivative martingales; regularly varying functions}

\begin{abstract}
We consider a last progeny modified branching random walk, in which the position of each particle at the last generation $n$ is modified by an i.i.d. copy of a random variable $Y$. Depending on the asymptotic properties of the tail of $Y$, we describe the asymptotic behaviour of the extremal process of this model as $n \to \infty$.
\end{abstract}

\maketitle

\section{Introduction}

Branching random walk (or BRW for short) is a particle system on the real line constructed as follows. It starts from a single particle at position $0$ forming the initial generation $0$ of the process. Each particle reproduces independently of all others by creating an identically distributed point process of children around its position. We denote by $\mathcal{U}$ the set of particles in the branching random walk. For all $u \in \mathcal{U}$, we write $S_u$ for the position of particle $u$, $|u|$ for the generation to which it belongs and $u_k$ for the ancestor of $u$ alive at generation $k \leq |u|$. We only consider in the present article supercritical branching random walks, satisfying the assumption
\begin{equation}
\label{eqn:supercritical}\E\left[ \#\{u \in \mathcal{U} : |u|=1\} \right] > 1.
\end{equation}
Under this condition, the BRW survives forever with positive probability,
or in other words \[\P(\#\{u \in \mathcal{U} : |u| = n\} > 0  \quad \forall\, n\geq0) > 0.\]
Note that we do not make any assumption on the finiteness of $\#\{u \in \mathcal{U} : |u|=1\}$, the random variable corresponding to the number of children of a given individual.

For all $\theta > 0$, we denote by $\kappa(\theta) = \log \E\left[ \sum_{|u|=1} \e^{\theta S_u} \right] \in (-\infty,\infty]$ the log-Laplace transform of (the intensity measure of the reproduction law of) the BRW. Assuming that $\kappa(\theta) < \infty$, we write
\[
  \kappa'(\theta) = \E\left[ \sum_{|u|=1} S_u \e^{\theta S_u-\kappa(\theta)} \right]
\]
whenever this integral is well-defined, irrespectively of the well-definition of $\kappa$ in a neighbourhood of $\theta$. If $\kappa'(\theta)$ is well-defined, we also write
\[
  \kappa''(\theta) = \E\left[ \sum_{|u|=1} (S_u-\kappa'(\theta))^2 \e^{\theta S_u - \kappa(\theta)} \right] \in [0,\infty].
\]

By Hölder's inequality, one immediately remarks that $\kappa$ is convex, therefore $\{\theta \in \R : \kappa(\theta) < \infty\}$ forms an interval. Moreover, by Lebesgue's dominated convergence theorem, it is straightforward to verify that $\kappa'(\theta)$ and $\kappa''(\theta)$ indeed correspond to the first and second derivative of $\kappa$ for  $\theta$ in the interior of that interval. We also remark that $\kappa''(\theta) = 0$ implies that almost surely, all particles in the first generation have the same position. We bar this degenerate situation from consideration by always assuming throughout this article that
\begin{equation}
  \label{eqn:non-degenerate}
  \forall x \in \R, \quad  \P(\exists u\in\calU: |u|=1,  S_u \neq x) > 0.
\end{equation}

Branching random walks have been the subject of a large and still expanding literature. One of the most studied features of this model is the asymptotic behaviour of the position of particles at the right tip of the BRW. \cite{Big2} proved that the \emph{maximal displacement} $M_n := \max_{|u|=n} S_u$ satisfies
\[
  \lim_{n \to \infty} \frac{M_n}{n} = v := \inf_{\theta > 0} \frac{\kappa(\theta)}{\theta} \quad \text{a.s. on the survival event of the BRW,}
\]
as long as there exists $\theta > 0$ such that $\kappa(\theta)< \infty$.
\cite{ABR09}  (see also \citealp{BraZ}) showed that if there exists $\theta_0 > 0$ such that
\begin{equation}
\label{eqn:defTheta0}
  \theta_0\kappa'(\theta_0) -\kappa(\theta_0) = 0,
\end{equation}
and additional integrability conditions hold, then setting $m_n := nv - \frac{3}{2\theta_0} \log n$, the sequence $(M_n - m_n)$ is tight, although \cite{HuS09} proved that this sequence exhibits almost sure fluctuations on a logarithmic scale. \cite{Aid} proved the convergence in distribution of the centred maximal displacement under close to optimal integrability conditions for a BRW satisfying~\eqref{eqn:defTheta0}.

In order to study the joint convergence in distribution of the first few rightmost particles, one can consider the so-called \emph{extremal process} of the BRW, defined as
\[
  \mathcal{Z}_n := \tau_{-m_n} \mathcal{X}_n,
\]
where $\mathcal{X}_n = \sum_{|u|=n} \delta_{S_u}$ is the empirical measure of the BRW and $\tau_x$ is the operator corresponding to a shift by $x$ of the point measure. \cite{Mad17} proved that under mild conditions, the extremal process $\mathcal{Z}_n$ of the BRW converges in law for the topology of vague convergence of point measure to a limiting decorated Cox process $\mathcal{Z}_\infty$. We refer to~\cite{Shi} for an overview of branching random walks, and to \cite{MaM} and \cite{SuZ15} for background on decorated Cox processes and their connections with branching particle systems.

In this article, we are interested in the \emph{last progeny modified BRW}, introduced by \cite{BG23}, which can be constructed as follows: Let $\nu$ be a probability measure on $\R$ and $(Y_u, u \in \mathcal{U})$ be a collection of i.i.d. random variables of law $\nu$, which are independent of the BRW. This model is the family of point measures defined for $n \geq 0$ by
\[
  \mathcal{E}_n = \sum_{|u|=n} \delta_{S_u + Y_u}.
\]
In other words, in the process $\mathcal{E}_n$, the positions of particles at the last step $n$ in the BRW are modified by the i.i.d. random variables $(Y_{u},|u|=n)$. This can be seen as a noisy observation of the positions of particles in the branching random walk.

\cite{BG23} considered a last progeny modified BRW with perturbation law $\nu$ given by the Gumbel law of parameter $1/\theta$. Assuming that $\theta \kappa'(\theta) - \kappa(\theta) \leq 0$, they showed that
\begin{equation}
  \label{eqn:cvExtremalProcess}
  \lim_{n \to \infty} \tau_{- m_n(\theta)} \mathcal{E}_n = \mathcal{Y}_\theta \quad \text{ in law for the topology of vague convergence},
\end{equation}
with $\mathcal{Y}_\theta$ a random point measure. On the one hand, if $\theta \kappa'(\theta) - \kappa(\theta) < 0$, then $ m_n(\theta) = n \frac{\kappa(\theta)}{\theta}$ and the limiting point measure $\mathcal{Y}_\theta$ is a Cox process (i.e. a Poisson point process with a random intensity measure) with intensity $\theta W_\infty(\theta) \e^{-\theta x} \,\dd x$. On the other hand, if $\theta \kappa'(\theta) - \kappa(\theta) = 0$,~\eqref{eqn:cvExtremalProcess} holds with $ m_n(\theta) = n \kappa'(\theta) - \frac{1}{2\theta} \log n$ and $\mathcal{Y}_\theta$ a Cox process with intensity  $\sqrt{\frac{2}{\pi \kappa''(\theta)}}\theta Z_\infty \e^{-\theta x} \,\dd x$.
 Here, $W_\infty(\theta)$ and  $Z_\infty$ are the almost sure limits of the \emph{additive martingale} $(W_n(\theta),n\geq 0)$ and the \emph{derivative martingale} $(Z_n,n\geq0)$, as defined in~\eqref{Def:Wn} and~\eqref{Def:Zn}, respectively.
 These results were extended by  \cite{Kow23}, who studied the asymptotic behaviour of the position of the right-most atom in the last progeny modified BRW for $\nu$ belonging to a larger class of probability distributions.

The main objective of the present article is to study the asymptotic behaviour of the full extremal process $\mathcal{E}_n$ as $n \to \infty$, assuming that the tail of the law $\nu$ is exponential, more precisely that there exists $\theta > 0$ and a regularly varying function $L$ such that
\begin{equation}
  \label{eqn:rvNu}
  \nu([x,\infty)) \sim L(x) \e^{-\theta x} \quad \text{ as $x \to \infty$.}
\end{equation}
Let us recall that a function $L$ is called regularly varying at $\infty$ with parameter $\alpha \in \R$ if for all~$\lambda > 0$, we have
\[
  \lim_{x \to \infty} \frac{L(\lambda x)}{L(x)} = \lambda^\alpha.
\]
A function is called \emph{regularly varying at $\infty$} if there exists such a parameter $\alpha$. We refer to the book by  \cite{BGT} for background on regularly varying functions.

We endow the set of point measures on $\R$ with the topology of vague convergence. For $\mu$ a Radon measure on $\R$ and $Q$ a non-negative random variable, we denote by PPP($Q \mu$) a Cox process with intensity $Q \mu$ on $\R$, i.e., conditionally on $Q$, a Poisson point process with intensity $Q \mu$.

We show in this article that the asymptotic behaviour of $\mathcal{E}_n$ is mainly predicted by the parameter~$\theta$ in~\eqref{eqn:rvNu}. More precisely, it satisfies a different asymptotic depending on whether $\theta < \theta_0$, $\theta = \theta_0$ or $\theta > \theta_0$ with $\theta_0$ the critical parameter of the BRW defined in~\eqref{eqn:defTheta0}. We refer to these three asymptotic regimes as subcritical, critical and supercritical, respectively, and we present our main results in each of these three regimes in the next three sections. We end the introduction with an heuristic explanation for our results.

\begin{remark}
Aiming for generality of our results, we do not wish to work under the assumption that the parameter $\theta_0$ exists except if necessary. However, by strict convexity of $\kappa$, we remark that $\theta \mapsto \theta \kappa'(\theta) - \kappa(\theta)$ is increasing whenever it is well-defined. Therefore, we will determine if the last progeny modified BRW is in the subcritical, critical or supercritical regime by observing the sign of $\theta \kappa'(\theta) - \kappa(\theta)$.
\end{remark}

\subsection{Extremal process in the subcritical regime}

Let $\nu$ be a probability distribution satisfying~\eqref{eqn:rvNu} and $S$ a BRW such that $\kappa(\theta) < \infty$. In the subcritical regime, we claim that the asymptotic behaviour of $\mathcal{E}_n$ can be described using the so-called \emph{additive martingale} of the BRW, defined as follows:
\begin{align}
\label{Def:Wn}
  W_n(\theta) := \sum_{|u|=n} \e^{\theta S_u - n \kappa(\theta)}.
\end{align}
It is immediate from its definition that $(W_n(\theta), n \geq 0)$ is a non-negative martingale, therefore converges almost surely to a limit $W_\infty(\theta)$. Assuming that $\kappa'(\theta)$ is well-defined,  \cite{Big} obtained a necessary and sufficient condition for its uniform integrability (with an alternative proof by \citealp{Lyo95} based on a spine decomposition argument). More precisely, if $\kappa'(\theta)$ is well-defined,
then the martingale $(W_n(\theta), n\geq0)$ is uniformly integrable if and only if
\begin{equation}
  \label{eqn:ncsNormal}
  \theta \kappa'(\theta) - \kappa(\theta) < 0 \text{ and } \E\left[W_1(\theta) \log_+ W_1(\theta)\right] < \infty,
\end{equation}
where $\log_+(x) = \log \max(x,1)$. If~\eqref{eqn:ncsNormal} does not hold, then $W_\infty(\theta) = 0$ a.s. This result was extended by \cite{AlI09}, who obtained a necessary and sufficient condition for the uniform integrability of $(W_n(\theta), n\geq0)$ regardless of the well-definition of $\kappa'(\theta)$.

For the last progeny modified BRW, we show that the asymptotic behaviour of its extremal process is in the subcritical regime whenever $(W_n(\theta), n \geq 0)$ is uniformly integrable, i.e., as soon as the conditions of Alsmeyer and Iksanov are satisfied. However, the more restrictive condition~\eqref{eqn:ncsNormal} allows us to work with a larger class of perturbation distribution $\nu$. We therefore state two versions of our results in the subcritical regime, the first one holding under minimal conditions on the BRW but with stronger assumptions on $\nu$, the second under minimal conditions for the law $\nu$ but with the more restrictive conditions on the BRW.

We first consider a BRW satisfying the assumptions of~\cite[Theorem~1.3]{AlI09} and a perturbation distribution $\nu$ having an exact exponential tail.
\begin{theorem}
\label{thm:miniBrw}
Let $\theta > 0$ such that $\kappa(\theta) < \infty$. We assume that $(W_n(\theta), n \geq 0)$ is uniformly integrable. Let $\nu$ be a probability distribution on $\R$ such that there exists a constant $L\in (0,\infty)$ satisfying
\begin{equation}
  \label{eqn:expTail}
  \nu([x,\infty)) \sim L \e^{-\theta x}\quad \text{as $x \to \infty$.}
\end{equation}
Then, writing
\[
 m_n = n \frac{\kappa(\theta)}{\theta} + \frac{1}{\theta} \log L,
 \]
the extremal process $\tau_{-m_n} \mathcal{E}_n$ converges in law to a PPP$(\theta W_\infty(\theta)\e^{-\theta x} \,\dd x)$.
\end{theorem}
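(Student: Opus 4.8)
The plan is to prove the convergence at the level of Laplace functionals. By the standard criterion for convergence of point measures in the topology of vague convergence, it suffices to show, for every non-negative continuous $f$ with compact support, that $\E\big[\exp(-\int f\,\dd(\tau_{-m_n}\calE_n))\big]$ converges to the Laplace functional of the candidate limit, namely $\E\big[\exp(-\theta W_\infty(\theta)I_f)\big]$ with $I_f := \int_{\R}(1-\e^{-f(z)})\e^{-\theta z}\,\dd z$. Since the perturbations $(Y_u)$ are i.i.d. and independent of the BRW, conditioning on $\calF_n := \sigma(S_u : |u|\leq n)$ factorises the conditional transform over particles,
\[
  \E\Big[\e^{-\sum_{|u|=n} f(S_u+Y_u-m_n)} \,\Big|\, \calF_n\Big] = \prod_{|u|=n}\big(1 - h_n(S_u)\big), \qquad h_n(s) := \int_{\R} \big(1-\e^{-f(s+y-m_n)}\big)\,\nu(\dd y).
\]
The whole argument reduces to identifying the almost sure limit of $\sum_{|u|=n} h_n(S_u)$ and then upgrading the product to $\exp(-\sum_{|u|=n} h_n(S_u))$.

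The key analytic input is the exact exponential tail~\eqref{eqn:expTail}. Writing $t=m_n-s$ and $\psi(t) := \e^{\theta t}\int_{\R}(1-\e^{-f(y-t)})\,\nu(\dd y)$, a Laplace-type computation shows that $\psi$ is bounded and that $\psi(t)\to L\theta I_f$ as $t\to\infty$; one checks this first for $f$ an indicator of an interval, where it follows directly from $\bar{\nu}(x)=\nu([x,\infty))\sim L\e^{-\theta x}$, and then extends to general $f$ by approximation, controlling the error through the uniform tail bound. Using $m_n = n\kappa(\theta)/\theta + \theta^{-1}\log L$, so that $\e^{-\theta m_n}=L^{-1}\e^{-n\kappa(\theta)}$, one rewrites
\[
  \sum_{|u|=n} h_n(S_u) = \frac{W_n(\theta)}{L}\sum_{|u|=n} \frac{\e^{\theta S_u-n\kappa(\theta)}}{W_n(\theta)}\,\psi(m_n-S_u),
\]
which exhibits the sum as $L^{-1}W_n(\theta)$ times an average of the values $\psi(m_n-S_u)$ against the probability weights $\e^{\theta S_u-n\kappa(\theta)}/W_n(\theta)$.

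Here the subcritical nature enters decisively. Uniform integrability of $(W_n(\theta))$ forces $\theta\kappa'(\theta)-\kappa(\theta)<0$, hence $\kappa(\theta)/\theta > v$, so by~\cite{Big2} one has $M_n-m_n\to-\infty$ almost surely on survival. Consequently every argument $m_n-S_u \geq m_n-M_n\to+\infty$, so all the values $\psi(m_n-S_u)$ tend to $L\theta I_f$ uniformly; since the weights sum to one, the average converges to $L\theta I_f$, and together with $W_n(\theta)\to W_\infty(\theta)$ this yields $\sum_{|u|=n} h_n(S_u)\to \theta W_\infty(\theta)I_f$ almost surely. The same gap $M_n-m_n\to-\infty$ handles the passage from the product to its exponential: since $\max_{|u|=n} h_n(S_u)\leq C\e^{\theta(M_n-m_n)}\to 0$, the elementary bound $0\leq -\log(1-h)-h \leq h^2/(1-h)$ gives
\[
  0 \leq \sum_{|u|=n}\big[-\log(1-h_n(S_u))-h_n(S_u)\big] \leq \frac{\max_{|u|=n} h_n(S_u)}{1-\max_{|u|=n} h_n(S_u)}\sum_{|u|=n} h_n(S_u)\longrightarrow 0.
\]
Thus $\prod_{|u|=n}(1-h_n(S_u))\to \exp(-\theta W_\infty(\theta)I_f)$ almost surely, and as this is bounded by $1$, dominated convergence gives the convergence of $\E\big[\prod_{|u|=n}(1-h_n(S_u))\big]$ to $\E\big[\exp(-\theta W_\infty(\theta)I_f)\big]$, which is precisely the Laplace functional of a PPP$(\theta W_\infty(\theta)\e^{-\theta x}\,\dd x)$.

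The main obstacle is that the tail hypothesis~\eqref{eqn:expTail} is only asymptotic, and that no moment control beyond $\kappa(\theta)<\infty$ is available (in particular $\kappa(2\theta)$ may be infinite, so the quadratic error term cannot be tamed by a martingale at parameter $2\theta$). Both difficulties are resolved by the single structural fact that in the subcritical regime $m_n$ lies far above the maximal displacement $M_n$: this makes the tail approximation uniform over all particles and simultaneously forces the product–exponential error to vanish, so that the extremal atoms of $\calE_n$ near $m_n$ arise entirely from atypically large perturbations, whose exponential tail reproduces exactly the weighting $\e^{\theta S_u}$ of the additive martingale.
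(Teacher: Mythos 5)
Your overall strategy coincides with the paper's: pass to Laplace functionals, establish the uniform tail asymptotics $\psi(t) \to L\theta I_f$ (this is exactly Lemma~\ref{lem:estimate} specialised to constant $L$, proved there by the same step-function approximation you sketch), replace $\sum_{|u|=n} h_n(S_u)$ by its deterministic-profile approximation $c_\phi(\theta) W_n(\theta)$ using the gap between $m_n$ and $M_n$, and conclude by dominated convergence. Your product-versus-exponential control via $0 \leq -\log(1-h)-h \leq h^2/(1-h)$ is a harmless variant of the paper's direct use of $g_\phi = -\log(1-h_n)$ together with $\left|\e^{-a}-\e^{-b}\right| \leq |a-b|\wedge 1$, and it correctly avoids any moment assumption beyond $\kappa(\theta) < \infty$.

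There is, however, one genuine gap: the justification of $M_n - m_n \to -\infty$. You argue that uniform integrability forces $\theta\kappa'(\theta)-\kappa(\theta)<0$, hence $\kappa(\theta)/\theta > v$, and then invoke Biggins' speed theorem. This fails under the hypotheses of the theorem, which deliberately assume only $\kappa(\theta)<\infty$ and uniform integrability (in the sense of Alsmeyer--Iksanov). First, $\kappa'(\theta)$ need not be well-defined, and may equal $-\infty$, so Biggins' dichotomy does not apply in the form you use; the paper stresses after Theorem~\ref{thm:miniLaw} that Theorem~\ref{thm:miniBrw} requires neither $\kappa'(\theta) > -\infty$ nor finiteness of $\kappa$ at any point other than $\theta$. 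Second, and more seriously, even when $\theta\kappa'(\theta)-\kappa(\theta)<0$ the inequality $\kappa(\theta)/\theta > v$ can fail: if $\kappa(\vartheta)=\infty$ for all $\vartheta>\theta$ (e.g. displacement tails of order $x^{-\beta}\e^{-\theta x}$ with $\beta>2$, for which uniform integrability can perfectly well hold), then since $\vartheta\mapsto\vartheta\kappa'(\vartheta)-\kappa(\vartheta)$ is increasing and negative at $\theta$, the map $\vartheta\mapsto\kappa(\vartheta)/\vartheta$ is decreasing on all of $(0,\theta]$, whence $v=\kappa(\theta)/\theta$ exactly. In that case the speed theorem only yields $M_n - m_n = o(n)$, which does not rule out $M_n - m_n \to +\infty$ along a subsequence, and both your uniform convergence of $\psi(m_n-S_u)$ and your bound $\max_{|u|=n} h_n(S_u) \leq C\e^{\theta(M_n-m_n)} \to 0$ collapse. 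Note that the crude inequality $\e^{\theta M_n - n\kappa(\theta)} \leq W_n(\theta)$ also only gives an a.s. upper bound, not divergence. The correct tool is Lemma~\ref{lem:asCv} (Shi's Lemma~3.1 applied to $V(u) = -\theta S_u + |u|\kappa(\theta)$, which satisfies $\E\bigl[\sum_{|u|=1}\e^{-V(u)}\bigr]=1$): it yields $\theta M_n - n\kappa(\theta) \to -\infty$ a.s. assuming nothing beyond $\kappa(\theta)<\infty$. With that single substitution, your argument is complete and matches the paper's proof.
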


Next, we consider a BRW satisfying~\eqref{eqn:ncsNormal} and a perturbation distribution $\nu$ satisfying~\eqref{eqn:rvNu}.
\begin{theorem}
\label{thm:miniLaw}
Let $\theta>0$ such that $\kappa(\theta) < \infty$. We assume that there exists $\delta > 0$ such that $\kappa(\theta+\delta) + \kappa(\theta - \delta) < \infty$ and that~\eqref{eqn:ncsNormal} holds. Let $\nu$ be a probability distribution on $\R$ satisfying~\eqref{eqn:rvNu} with a regularly varying function $L$ of index $\alpha$. Then, writing
\[
  m_n = n \frac{\kappa(\theta)}{\theta} + \frac{1}{\theta} \log L(n) \quad \text{and} \quad c_1 = \left(\frac{\kappa(\theta)}{\theta} - \kappa'(\theta) \right)^\alpha,
\]
the extremal process $\tau_{-m_n} \mathcal{E}_n$ converges in law to a PPP$(c_1\theta W_{\infty}(\theta) \e^{-\theta x} \,\dd x)$.
\end{theorem}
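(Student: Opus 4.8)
The plan is to compute the Laplace functional of the point process $\tau_{-m_n}\mathcal{E}_n$ and to show that it converges to that of the claimed Cox process. Fix a non-negative continuous function $f$ with compact support contained in $[-A,A]$, and set $\phi(s) = \int \e^{-f(s+y)}\,\nu(\dd y)$ and $\psi(s) = 1 - \phi(s) = \int (1 - \e^{-f(s+y)})\,\nu(\dd y)$. Conditioning on the branching random walk and using that the variables $(Y_u)$ are i.i.d.\ and independent of it, one obtains
\[
  \E\left[\exp\left(-\sum_{|u|=n} f(S_u + Y_u - m_n)\right)\,\Big|\,\calF_n\right] = \prod_{|u|=n}\phi(S_u - m_n) = \prod_{|u|=n}\bigl(1 - \psi(S_u - m_n)\bigr).
\]
The Laplace functional of $\mathrm{PPP}(c_1\theta W_\infty(\theta)\e^{-\theta x}\,\dd x)$ evaluated at $f$ equals $\E[\exp(-c_1\theta W_\infty(\theta) I_f)]$ with $I_f := \int_\R (1 - \e^{-f(x)})\e^{-\theta x}\,\dd x$, so it suffices to prove that $\prod_{|u|=n}(1 - \psi(S_u - m_n))$ converges in probability to $\exp(-c_1\theta I_f W_\infty(\theta))$; the conclusion then follows by bounded convergence since these products lie in $[0,1]$.

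First I would establish the tail asymptotics of $\psi$. Using~\eqref{eqn:rvNu} and that $f$ is supported in $[-A,A]$, a computation based on the uniform convergence theorem for regularly varying functions shows that
\[
  \psi(-a) \sim \theta I_f\, L(a)\,\e^{-\theta a}\qquad\text{as } a\to\infty,
\]
together with a Potter-type domination $\psi(-a) \le C\, L(a)\,\e^{-\theta a}$ valid for all $a$ above some threshold, and the trivial bound $\psi \le 1$. Writing the exponent of the conditional Laplace functional as $-\sum_{|u|=n}\log(1 - \psi(S_u-m_n))$ and using $-\log(1-t) = t + O(t^2)$ for small $t$, the heart of the argument becomes the convergence
\[
  M_n := \sum_{|u|=n}\psi(S_u - m_n)\ \pconvn\ c_1\theta I_f\, W_\infty(\theta),
\]
together with the vanishing of the quadratic correction $\sum_{|u|=n}\psi(S_u - m_n)^2$. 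The latter follows from $\sum_{|u|=n} \psi^2 \le (\max_u \psi(S_u - m_n))\, M_n$ and the fact that, in the subcritical regime, every particle satisfies $S_u - m_n \le M_n - m_n \to -\infty$, so that $\max_u \psi(S_u - m_n)\to 0$.

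For the convergence of $M_n$, I would insert the asymptotics of $\psi$ and the identity $\e^{-\theta m_n} = \e^{-n\kappa(\theta)}/L(n)$ to isolate the main term $\theta I_f\sum_{|u|=n} \tfrac{L(m_n - S_u)}{L(n)}\,\e^{\theta S_u - n\kappa(\theta)}$, which I would compare with $c_1\theta I_f\, W_n(\theta)$, where $W_n(\theta)\to W_\infty(\theta)$ a.s.\ and in $L^1$ by uniform integrability. The comparison relies on three inputs. On the bulk of particles, those with $S_u = n\kappa'(\theta) + o(n)$, one has $(m_n - S_u)/n \to \kappa(\theta)/\theta - \kappa'(\theta)$, and the uniform convergence theorem gives $L(m_n - S_u)/L(n) \to (\kappa(\theta)/\theta - \kappa'(\theta))^\alpha = c_1$; since the mass of $W_n(\theta)$ concentrates on this bulk (a consequence of the uniform integrability of the additive martingale, via the standard truncation argument), the bulk contribution converges to $c_1\theta I_f W_\infty(\theta)$. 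On the far left, where $m_n - S_u$ is atypically large, Potter's bound turns $L(m_n - S_u)/L(n)$ into a polynomial factor in $(m_n - S_u)/n$, which is absorbed by the exponential weight upon splitting $\e^{\theta S_u} = \e^{(\theta-\delta)S_u}\e^{\delta S_u}$ and comparing with $W_n(\theta - \delta)$, using $\kappa(\theta - \delta)<\infty$. Finally, the particles with $S_u > m_n - A$, for which the asymptotics of $\psi$ give no information, contribute nothing with high probability: a first-moment estimate via the many-to-one lemma at tilt $\theta$ (for which the step distribution has mean $\kappa'(\theta)$) gives
\[
  \E\bigl[\#\{|u|=n : S_u > m_n - A\}\bigr] \le \e^{\theta A}L(n)^{-1}\,\P\bigl(S^{(\theta)}_n > m_n - A\bigr),
\]
which tends to $0$ because $m_n/n \to \kappa(\theta)/\theta > \kappa'(\theta)$ forces the random-walk probability to decay exponentially, the exponential moment needed for this Cramér bound being furnished by $\kappa(\theta+\delta)<\infty$.

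The main obstacle is precisely this convergence of $M_n$: one must control the contribution of every particle simultaneously, reconciling the regularly varying weight $L(m_n - S_u)$, which grows polynomially as particles drift to the left, with the exponential reweighting carried by the additive martingale, while ruling out the pathological particles near and beyond $m_n$ where the tail of $\nu$ provides no control. The integrability hypotheses $\kappa(\theta\pm\delta)<\infty$ and the uniform integrability of $W_n(\theta)$ are exactly what render the Potter estimates and the concentration of the martingale mass quantitative enough to close the argument.
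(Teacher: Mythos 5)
Your proposal is correct in substance and follows the same skeleton as the paper's proof: convergence of the Laplace functional, the tail asymptotics $\psi(-a)\sim \theta I_f\,L(a)\e^{-\theta a}$ (this is the paper's Lemma~\ref{lem:estimate} with $c_\phi(\theta)=\theta I_f$, stated there directly for $g_\phi=-\log(1-\psi)$, which absorbs your $t+O(t^2)$ step), the identity $\e^{-\theta m_n}=\e^{-n\kappa(\theta)}/L(n)$, the bulk $|S_u-n\kappa'(\theta)|\le \eta n$ treated via the Uniform Convergence Theorem together with many-to-one and the law of large numbers, and the far left controlled through Potter bounds and comparison with $W_n(\theta-\delta)$ using $\kappa(\theta-\delta)<\infty$. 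The one genuine divergence is how you clear the right tail: you rule out particles above $m_n-A$ by a first-moment Markov bound with a Cramér estimate for the tilted walk, whereas the paper derives an almost sure \emph{linear} bound $M_n-m_n\le-\epsilon n/2$ by applying Lemma~\ref{lem:asCv} at $\theta+\delta$, using that convexity and~\eqref{eqn:ncsNormal} give $\kappa(\theta+\delta)/(\theta+\delta)<\kappa(\theta)/\theta$. The paper's route buys more: it forces $m_n-S_u\ge\epsilon n/2$ uniformly over all particles, so $(m_n-S_u)/n$ stays in a compact subset of $(0,\infty)$ and the entire region $S_u-n\kappa'(\theta)>\eta n$ is dispatched in one stroke. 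Your Markov bound only clears $S_u>m_n-A$ and leaves the strip $n(\kappa'(\theta)+\eta)<S_u\le m_n-A$ unaddressed; there $L(m_n-S_u)/L(n)$ can grow polynomially in $n$ when $\alpha<0$, so you must explicitly combine your Potter-type domination of $\psi$ with the exponential Cramér decay of $\E\bigl[\sum_{|u|=n}\e^{\theta S_u-n\kappa(\theta)}\ind{S_u-n\kappa'(\theta)>\eta n}\bigr]$ to absorb it — the tools are already in your write-up, but this region needs to be named. Relatedly, your bald assertion that $M_n-m_n\to-\infty$ (invoked for $\max_u\psi(S_u-m_n)\to0$) does not follow from Lemma~\ref{lem:asCv} alone when $\alpha<0$, since then $\frac{1}{\theta}\log(1/L(n))\to+\infty$ and competes with $M_n-n\kappa(\theta)/\theta\to-\infty$ at an unknown rate; either your high-probability exclusion above $m_n-A$ or the paper's linear bound repairs this.
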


Noting that a constant is a regularly varying function of index $0$, we see that these two statements are consistent. Theorem~\ref{thm:miniBrw} corresponds to the case $\alpha=0$ and $c_1=1$; however, for its conclusion to hold, it is not necessary to assume that $\kappa$ is finite at any point other than $\theta$, or that $\kappa'(\theta) > -\infty$, unlike in Theorem~\ref{thm:miniLaw}.
 The conditions we worked under in these results were chosen to obtain an explicit centring sequence $(m_n)$. However, we believe that it is a generic result in the subcritical regime that there will exist a centring sequence such that $\tau_{-m_n} \mathcal{E}_n$ converges in law to a Poisson point process with intensity $W_\infty(\theta) \e^{-\theta x} \dd x$.

\subsection{Extremal process in the critical regime}

Let $S$ be a BRW, we assume that there exists $\theta_0 > 0$ such that~\eqref{eqn:defTheta0} holds, and that $\nu$ satisfies~\eqref{eqn:rvNu} with $\theta = \theta_0$. We refer to this situation as the critical regime. We observe that the martingale $(W_n(\theta_0), n\geq0)$ converges to 0 almost surely, by the previously mentioned result of Biggins. However, the so-called \emph{derivative martingale} defined for $n \geq 0$ by
\begin{align}
\label{Def:Zn}
  Z_n := \sum_{|u|=n} (n \kappa'(\theta_0) - S_u) \e^{\theta_0 S_u - n \kappa(\theta_0)}
\end{align}
works as a good replacement for this martingale. Immediate computations show that $(Z_n, n\geq 0)$ is a signed martingale, which is generally not uniformly integrable. Assuming that $\kappa''(\theta)  <\infty$, \cite{Aid} obtained necessary condition that \cite{Che15} proved to be sufficient for the convergence of $Z_n$ to an a.s. non-negative non-degenerate limit $Z_\infty$. This necessary and sufficient condition is
\begin{equation}
  \label{eqn:ncsBoundary}
  \E\left[ W_1(\theta_0) (\log_+ W_1(\theta_0))^2 \right] + \E\left[\bar{W}_1 \log_+(\bar{W}_1)\right] < \infty,
\end{equation}
where $\bar{W}_1 = \sum_{|u|=1} (\kappa'(\theta_0) - S_u)_+ \e^{\theta_0 S_u - \kappa(\theta_0)}$ and $x_+ = \max(x,0)$.

Under these conditions, as well as a slightly more stringent condition on the tail of $\nu$, we obtain a similar result to the subcritical regime, with $Z_\infty$ playing the role of $W_\infty(\theta)$, and a modified centring term for the extremal process.
\begin{theorem}
\label{thm:derBrw}
Let $\theta > 0$ such that $\kappa(\theta) < \infty$ and $\kappa''(\theta) < \infty$. We assume that
\[ \theta \kappa'(\theta) - \kappa(\theta) =0,\]
i.e., $\theta = \theta_0$ and that~\eqref{eqn:ncsBoundary} holds. Let $\nu$ be a probability distribution on $\R$ such that there exists a regularly varying function $L$ at $\infty$ with index $\alpha\in(-2,0)$ satisfying~\eqref{eqn:rvNu}. Then, writing
\begin{align*}
  m_n = n \frac{\kappa(\theta)}{\theta} + \frac{1}{\theta} \log L\left(\sqrt{n}\right) - \frac{1}{2\theta} \log n  \quad \text{and} \quad c_2 =\sqrt{\frac{2}{\pi \kappa''(\theta)}}\left( 2 \kappa''(\theta) \right)^{\frac{\alpha}{2}} \Gamma\left(\frac{\alpha}{2}+1\right),
\end{align*}
the extremal process $\tau_{-m_n} \mathcal{E}_n$ converges in law to a PPP$(c_2 \theta Z_\infty \e^{-\theta x} \,\dd x)$.
\end{theorem}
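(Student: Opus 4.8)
The plan is to establish convergence of the extremal process through its Laplace functional. Fix a nonnegative continuous function $f$ with compact support, say $\supp f \subseteq [-A,A]$, and set $h = 1 - \e^{-f}$. Writing $\calF$ for the $\sigma$-field generated by the branching random walk and using that the variables $(Y_u, |u| = n)$ are i.i.d.\ and independent of $\calF$, I obtain
\begin{align*}
  \E\left[ \exp\left( - \sum_{|u|=n} f(S_u + Y_u - m_n) \right) \,\big|\, \calF \right]
  = \prod_{|u|=n} \bigl(1 - \phi_n(S_u)\bigr),
  \qquad
  \phi_n(s) := \int h(s + y - m_n)\,\nu(\dd y).
\end{align*}
The goal is to show that, in probability, this conditional Laplace functional converges to $\exp\bigl(-c_2 \theta Z_\infty \int_\R h(x) \e^{-\theta x}\,\dd x\bigr)$, which is exactly the Laplace functional of a PPP$(c_2\theta Z_\infty \e^{-\theta x}\,\dd x)$; an application of bounded convergence then upgrades this to the convergence of the unconditional Laplace functionals and hence yields the theorem.

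Since $h$ is supported on $[-A,A]$, the quantity $\phi_n(s)$ only probes the law $\nu$ near the point $w := m_n - s$. For $w \to \infty$, assumption~\eqref{eqn:rvNu} together with the uniform convergence theorem for regularly varying functions gives the equivalent
\begin{align*}
  \phi_n(s) = \int h(y - w)\,\nu(\dd y) \sim \theta L(w) \e^{-\theta w} \int_\R h(z) \e^{-\theta z}\,\dd z =: C_f\, L(w) \e^{-\theta w},
\end{align*}
where $C_f = \theta \int_\R h(z) \e^{-\theta z}\,\dd z$. In the critical regime $\kappa(\theta)/\theta = \kappa'(\theta)$, so that $\e^{-\theta m_n} = \e^{-n\kappa(\theta)} \sqrt{n}/L(\sqrt n)$ and $m_n - S_u = (n\kappa'(\theta) - S_u) + \tfrac{1}{\theta}\log L(\sqrt n) - \tfrac{1}{2\theta}\log n$. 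As the bulk of the mass of the weights $\e^{\theta S_u - n\kappa(\theta)}$ sits at $n\kappa'(\theta) - S_u$ of order $\sqrt n$, the logarithmic corrections are negligible and regular variation of index $\alpha$ yields $L(m_n - S_u)/L(\sqrt n) \approx \bigl((n\kappa'(\theta) - S_u)/\sqrt n\bigr)^\alpha$. Using $\log(1 - \phi_n) \approx -\phi_n$, this reduces the problem to the convergence of
\begin{align*}
  \sum_{|u|=n} \phi_n(S_u) \approx C_f \sqrt n \sum_{|u|=n} \left( \frac{n\kappa'(\theta) - S_u}{\sqrt n} \right)^{\!\alpha} \e^{\theta S_u - n\kappa(\theta)}.
\end{align*}

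The analytic heart of the proof is a Seneta--Heyde type convergence, valid under~\eqref{eqn:ncsBoundary} and $\kappa''(\theta) < \infty$, for the weighted additive martingale: for a suitable test function $F$,
\begin{align*}
  \sqrt n \sum_{|u|=n} F\!\left( \frac{n\kappa'(\theta) - S_u}{\sqrt n} \right) \e^{\theta S_u - n\kappa(\theta)}
  \pconvn \sqrt{\frac{2}{\pi \kappa''(\theta)}}\, \frac{Z_\infty}{\kappa''(\theta)} \int_0^\infty y\, F(y)\, \e^{-y^2/(2\kappa''(\theta))}\,\dd y.
\end{align*}
The case $F \equiv 1$ recovers the classical norming $\sqrt n\, W_n(\theta) \to \sqrt{2/(\pi\kappa''(\theta))}\,Z_\infty$, and applying it with $F(y) = y^\alpha$ and evaluating the Gaussian moment $\int_0^\infty y^{\alpha+1}\e^{-y^2/(2\kappa''(\theta))}\,\dd y = 2^{\alpha/2}(\kappa''(\theta))^{\alpha/2 + 1}\Gamma(\tfrac{\alpha}{2}+1)$ produces precisely the constant $c_2 Z_\infty$. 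Here the range $\alpha \in (-2,0)$ is essential: $\alpha > -2$ guarantees integrability of $y^{\alpha+1}\e^{-y^2/(2\kappa''(\theta))}$ near the origin, i.e.\ that the contribution of particles right at the frontier is finite, while $\alpha < 0$ keeps the perturbation heavy enough that the displaced particles dominate and the tail expansion of $\phi_n$ remains uniform.

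I expect the main obstacle to lie precisely in making these last two steps rigorous and mutually compatible. On the one hand, the weighted convergence with the singular weight $F(y) = y^\alpha$ must be proved through a spine change of measure followed by truncation and a second moment estimate on a good event, carefully handling both particles near the frontier (where $y \to 0$ and $F$ blows up) and those far behind. On the other hand, one must control the errors committed in replacing $\phi_n(S_u)$ by its asymptotic equivalent and $\log(1 - \phi_n)$ by $-\phi_n$ \emph{uniformly} over all particles; in particular one must show that atypical particles with $S_u$ close to or beyond $m_n$ — for which the large-$w$ expansion of $\phi_n$ fails — contribute negligibly, a bound that should follow from the known location of the front at $n\kappa'(\theta) - \tfrac{3}{2\theta}\log n$ together with the decay coming from $\alpha < 0$. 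Once these are in place, bounded convergence promotes the conditional statement to convergence of the Laplace functionals, identifying the limit as the announced Cox process.
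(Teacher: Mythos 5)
Your proposal follows essentially the same route as the paper: Laplace functionals, the regular-variation asymptotics of $\phi_n$ (the paper's $g_\phi$ from Lemma~\ref{lem:estimate}), reduction to the critical additive martingale weighted by $\left((n\kappa'(\theta)-S_u)/\sqrt{n}\right)^\alpha$, and identification of $c_2$ via the Rayleigh moment — and the weighted Seneta--Heyde convergence you postulate is exactly what the paper obtains, after truncating to $m_n - S_u \geq \epsilon n^{1/2}$ so that the weight is bounded, by citing \cite{Mad16} together with \cite{AiS} rather than redoing a spine argument. Your second flagged obstacle — negligibility of particles within $\epsilon n^{1/2}$ of $m_n$ — is precisely the paper's Lemma~\ref{lem:lemma}, proved by a first-moment and ballot-theorem estimate yielding a bound of order $\epsilon^{\alpha+2}$, which is where $\alpha > -2$ enters, just as you anticipated.
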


\begin{remark}
Theorem~\ref{thm:derBrw} also holds assuming that $\nu$ satisfies~\eqref{eqn:rvNu} with $L$ a positive constant, using the same proof techniques as the ones we develop below. This result therefore extends the previous estimate of \cite{BG23} for $\nu$ a Gumbel distribution with parameter~$\frac{1}{\theta_0}$.
\end{remark}

\begin{remark}
We believe that up to adding a stronger integrability condition on the reproduction law of the BRW, one could prove a result similar to Theorem~\ref{thm:derBrw} assuming that $\nu$ satisfies~\eqref{eqn:rvNu} with $L$ a regularly varying function with parameter $\alpha \geq 0$. However, we predict a sharp phase correction around $\alpha = -2$, and that the limiting extremal process should resemble the one obtained in the supercritical regime if $\alpha < -2$.
\end{remark}

\subsection{Extremal process in the supercritical regime}

We finally turn to the case of perturbation law $\nu$ satisfying~\eqref{eqn:rvNu} with $\theta > \theta_0$. In this situation, the perturbation has such strong tails that its asymptotic behaviour can be deduced from the asymptotic behaviour of $\mathcal{Z}_n$, the extremal process of the BRW. As a result, we consider here a BRW satisfying the assumptions used by~\mbox{\cite{Mad17}} to prove the convergence of $\mathcal{Z}_n$. These conditions include notably~\eqref{eqn:ncsBoundary}. In this situation, it will be sufficient for the tail of $\nu$ to be light enough for the convergence of $\mathcal{E}_n$ to hold.

\begin{theorem}
\label{thm:aboveBoundary}
We assume that the reproduction law of the BRW is non-lattice, and there exists $\theta_0 > 0$ satisfying~\eqref{eqn:defTheta0}. We assume that $\kappa''(\theta_0) < \infty$ and that~\eqref{eqn:ncsBoundary} holds. Let $\nu$ be a probability measure such that there exist $C > 0$ and $\theta > \theta_0$ verifying
\begin{equation}
 \label{eqn:NuBounded}
   \nu([x,\infty)) \leq C \e^{-\theta x} \text{ for $x \in \R$.}
\end{equation}
Then, writing
\[
m_n = n \kappa'(\theta_0) - \frac{3}{2\theta_0} \log n,
\]
 the extremal process $\tau_{-m_n} \mathcal{E}_n$ converges in law to
\begin{equation}
  \label{eqn:aboveBoundary}
  \sum_{i \in \N} \delta_{z_i + Y_i},
\end{equation}
where $(z_i, i \in \N)$ are the atoms of the limiting extremal process $\mathcal{Z}_\infty$ of the BRW and $(Y_i)$ is an independent sequence of i.i.d. variables with law $\nu$.
\end{theorem}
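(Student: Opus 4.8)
We characterise the convergence in law of $\tau_{-m_n}\mathcal{E}_n$ through its Laplace functional: since vague convergence of point measures is determined by the convergence of $\E[\e^{-\langle\,\cdot\,,\phi\rangle}]$ over all non-negative continuous compactly supported $\phi$, it suffices to fix such a $\phi$ and prove that $\E[\e^{-\langle\tau_{-m_n}\mathcal{E}_n,\phi\rangle}]$ converges to the corresponding quantity for $\sum_i\delta_{z_i+Y_i}$. The plan is to condition on the branching random walk $\mathcal{F}_n:=\sigma(S_u:|u|\le n)$ and integrate out the independent perturbations. Setting $\psi(x):=-\log\int_{\R}\e^{-\phi(x+y)}\,\nu(\dd y)$, the conditional independence of the family $(Y_u)_{|u|=n}$ gives
\[
\E\left[\e^{-\langle \tau_{-m_n}\mathcal{E}_n,\phi\rangle}\mid \mathcal{F}_n\right]
=\prod_{|u|=n}\int_{\R}\e^{-\phi(S_u-m_n+y)}\,\nu(\dd y)
=\e^{-\langle \mathcal{Z}_n,\psi\rangle},
\]
where $\mathcal{Z}_n=\tau_{-m_n}\mathcal{X}_n$ is the extremal process of the BRW, so that $\E[\e^{-\langle\tau_{-m_n}\mathcal{E}_n,\phi\rangle}]=\E[\e^{-\langle\mathcal{Z}_n,\psi\rangle}]$. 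The same computation applied in the limit, using the independence of $(Y_i)$ from $\mathcal{Z}_\infty$, shows that the target quantity equals $\E[\e^{-\langle\mathcal{Z}_\infty,\psi\rangle}]$. The statement thus reduces to proving
\[
\E\left[\e^{-\langle\mathcal{Z}_n,\psi\rangle}\right]\convn \E\left[\e^{-\langle\mathcal{Z}_\infty,\psi\rangle}\right],
\]
together with the fact that $\sum_i\delta_{z_i+Y_i}$ is almost surely a locally finite point measure.

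The function $\psi$ is continuous, with $0\le\psi\le\norm{\phi}_\infty$, the sole obstruction being that it is not compactly supported, so Madaule's vague convergence $\mathcal{Z}_n\Rightarrow\mathcal{Z}_\infty$ does not apply directly. First I would record the two tail estimates that make $\psi$ almost integrable against the extremal process. Assuming $\supp\phi\subseteq[-A,A]$, the relation $\phi(x+y)\neq0$ forces $y\ge -A-x$, so~\eqref{eqn:NuBounded} yields $\psi(x)\le C'\e^{\theta x}$ as $x\to-\infty$; symmetrically, $\psi(x)\to0$ as $x\to+\infty$ since then $\phi(x+y)\neq 0$ forces $y$ to lie in the left tail of $\nu$.

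Next I would run a truncation argument. Fix a continuous cutoff $\chi_K$ equal to $1$ on $[-K,K]$ and vanishing off $(-K-1,K+1)$, and set $\psi_K=\psi\chi_K$, which is continuous and compactly supported. By Madaule's theorem and the continuous mapping theorem, the bounded variables $\e^{-\langle\mathcal{Z}_n,\psi_K\rangle}$ converge in law, hence $\E[\e^{-\langle\mathcal{Z}_n,\psi_K\rangle}]\to\E[\e^{-\langle\mathcal{Z}_\infty,\psi_K\rangle}]$ for each fixed $K$. Using $|\e^{-a}-\e^{-b}|\le\min(1,|a-b|)$ for $a,b\ge0$, the truncation error is bounded by $\langle\mathcal{Z}_n,\psi-\psi_K\rangle$, which splits into a far-right piece supported on $(K,\infty)$ and a far-left piece $\langle\mathcal{Z}_n,\psi\indset{(-\infty,-K)}\rangle$. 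The far-right piece is non-zero only on $\{\max\mathcal{Z}_n>K\}$, an event of probability at most $\eta(K)\to0$ uniformly in $n$ by tightness of the centred maximum. The far-left piece is the crux, and is where $\theta>\theta_0$ enters: bounding $\psi$ by $C'\e^{\theta\,\cdot}$ and decomposing into unit shells,
\[
\E\left[\langle\mathcal{Z}_n,\psi\indset{(-\infty,-K)}\rangle\right]
\le C'\sum_{j\ge K}\e^{-\theta j}\,\E\left[\#\{|u|=n: S_u-m_n\in[-j-1,-j)\}\right].
\]
The main technical input is the uniform ballot-type bound $\E[\#\{|u|=n: S_u-m_n\in[-j-1,-j)\}]\le C(1+j)\e^{\theta_0 j}$, valid for all $n$ and $j\ge0$ under~\eqref{eqn:defTheta0}, \eqref{eqn:ncsBoundary} and $\kappa''(\theta_0)<\infty$; this is the standard barrier estimate underlying the convergence of the extremal process. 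Since $\theta>\theta_0$, the series $\sum_{j\ge K}(1+j)\e^{-(\theta-\theta_0)j}$ converges and tends to $0$ as $K\to\infty$, making the far-left contribution negligible uniformly in $n$. The same shell decomposition, applied to the intensity of $\mathcal{Z}_\infty$ (whose atoms accumulate at rate $\e^{-\theta_0 z}$ while each atom is moved above a fixed level $a$ with probability $\lesssim\e^{\theta z}$), shows simultaneously that $\langle\mathcal{Z}_\infty,\psi\rangle<\infty$ almost surely and that $\sum_i\delta_{z_i+Y_i}$ is locally finite. A three-epsilon argument combining the per-$K$ convergence with these uniform tail bounds then yields $\E[\e^{-\langle\mathcal{Z}_n,\psi\rangle}]\to\E[\e^{-\langle\mathcal{Z}_\infty,\psi\rangle}]$, completing the proof.

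The hard part is precisely the uniform-in-$n$ control of the far-left shells. The naive first moment $\E[\sum_{|u|=n}\e^{\theta(S_u-m_n)}]$ diverges like $n^{3/2}$ because of the logarithmic correction in $m_n=n\kappa'(\theta_0)-\tfrac{3}{2\theta_0}\log n$, so a crude Markov bound is insufficient; one genuinely needs the refined barrier estimate, equivalently a truncated first moment restricting the ancestral trajectory to stay below the expected frontier.
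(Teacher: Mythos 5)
Your reduction to the Laplace functional, the function $\psi=g_\phi$, the cutoff $\chi_K$, Madaule's theorem for the compactly supported part, and the tightness of $M_n-m_n$ for the far-right piece all coincide with the paper's proof. The gap is in your treatment of the far-left piece: the claimed uniform ballot bound $\E[\#\{|u|=n: S_u-m_n\in[-j-1,-j)\}]\leq C(1+j)\e^{\theta_0 j}$, valid for all $n$, is false, even after restricting trajectories to stay below the line $k\mapsto k\kappa'(\theta_0)+A$. Indeed, by the many-to-one lemma the left side equals $\E\bigl[\e^{-\theta_0 T_n+n\kappa(\theta_0)}\ind{T_n-m_n\in[-j-1,-j)}\bigr]$ (with the barrier constraint on $(T_k)$), and since $\theta_0 m_n=n\kappa(\theta_0)-\tfrac32\log n$, the weight is $\asymp n^{3/2}\e^{\theta_0 j}$ while the ballot estimate for the constrained walk gives a probability $\asymp (1+A)\bigl(1+A+j+\tfrac{3}{2\theta_0}\log n\bigr)n^{-3/2}$, because the terminal window sits at depth $j+\tfrac{3}{2\theta_0}\log n$ below the barrier line. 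The resulting first moment is $\asymp (1+j+\log n)\e^{\theta_0 j}$, and the extra $\log n$ is genuine, not an artifact: the number of particles within $O(1)$ of $m_n$ converges in law to a quantity whose intensity involves $Z_\infty$, which has a tail $\P(Z_\infty>x)\sim c/x$ and hence infinite mean, so no bound on $\E[\#\{\cdot\}]$ uniform in $n$ can hold. Consequently $\sup_n \E\bigl[\langle\tau_{-m_n}\mathcal{X}_n,\psi\indset{(-\infty,-K)}\rangle\bigr]=\infty$ for every fixed $K$, and the Markov-plus-three-epsilon scheme you propose breaks down at exactly the step you identified as the crux.

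The paper circumvents this by never taking a first moment of the far-left mass. It fixes $\theta_1\in(\theta_0,\theta)$ and invokes the tightness (Madaule, Proposition~2.1) of $\Sigma_n:=\sum_{|u|=n}\e^{\theta_1(S_u-m_n)}$, then splits on the event $\{\Sigma_n\leq K\}$: on that event, each particle with $S_u-m_n\leq -K$ satisfies $\e^{\theta(S_u-m_n)}\leq \e^{-(\theta-\theta_1)K}\e^{\theta_1(S_u-m_n)}$, so Lemma~\ref{lem:estimateBound} yields the deterministic bound
\[
\sum_{|u|=n} g_\phi(S_u-m_n)\ind{S_u-m_n\leq -K}\;\leq\; C' K \e^{-K(\theta-\theta_1)},
\]
while the complementary event has probability $\P(\Sigma_n\geq K)$, small uniformly in $n$ by tightness. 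This replaces your expectation bound by a bound in probability and restores the uniformity in $n$ that the $\log n$ destroys. Your approach could in principle be repaired by a finer path localization (a barrier bending down by $\tfrac{3}{2\theta_0}\log$ near time $n$, as in Aïdékon's and Madaule's work, which removes the $\log n$ from the endpoint depth), but that amounts to re-proving a substantial part of the machinery that Madaule's tightness statement already packages; as written, your key lemma does not hold and the proof does not close. A minor additional slip: the unrestricted first moment $\E[\sum_{|u|=n}\e^{\theta(S_u-m_n)}]=\e^{n(\kappa(\theta)-\theta\kappa'(\theta_0))}n^{3\theta/2\theta_0}$ diverges exponentially (by strict convexity, $\kappa(\theta)>\theta\kappa'(\theta_0)$ for $\theta>\theta_0$), not like $n^{3/2}$, though this does not affect your conclusion that the crude bound is insufficient.
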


\subsection{Heuristics for our results}

The asymptotic behaviour of $\mathcal{E}_n$ can be explained by observing the distribution of particles at the $n$th generation of the BRW. We assume here that $\kappa(\theta)$ is finite for all $\theta \geq 0$, and that there exists $\theta_0$ satisfying~\eqref{eqn:defTheta0}. In this situation, for all $\theta < \theta_0$ it is well-known that the contributions to the value of the martingale $W_n(\theta)$ are mostly supported by particles in a neighbourhood of $n \kappa'(\theta)$. Otherwise stated, there are at time $n$ approximately $W_\infty(\theta)\e^{-n (\theta \kappa'(\theta) - \kappa(\theta))}$ particles to the neighbourhood of $n \kappa'(\theta)$. Moreover, with high probability, there is no particle to the right of $n \kappa'(\theta_0)$.

Let us now estimate the maximal position reached by $S_u + Y_u$ by a particle $u$ such that $S_u$ is close to $n \kappa'(\theta)$. Under the assumption that $\nu$ has an exponential tail with parameter $\vartheta$, we know that the maximum of $N$ independent copies of $Y$ will be close to $\frac{\log N}{\vartheta}$. Therefore, we expect particles in a neighbourhood of $n \kappa'(\theta)$ to create a cloud of perturbed observations with the furthest point being found around
\[
  n \left[ \kappa'(\theta) + \frac{1}{\vartheta} \left( \kappa(\theta)  - \theta \kappa'(\theta) \right) \right] + \frac{1}{\vartheta}\log W_\infty(\theta).
\]
Differentiating the linearly growing term with respect to $\theta$, we obtain $\kappa''(\theta)(1 - \theta/\vartheta)$, therefore this quantity will be maximal, for large $n$, when $\theta = \min(\vartheta,\theta_0)$.

When $\vartheta < \theta_0$, the extremal process $\mathcal{E}_n$ is thus obtained by generating a large number of i.i.d. copies of $Y$. As a result, this extremal process, centred by $n \kappa(\vartheta)/\vartheta$ will converge to a Poisson point process with intensity $\vartheta\e^{-\vartheta x}\, \dd x$, shifted by $\frac{1}{\vartheta}\log W_\infty(\vartheta)$ to take into account the exact number of random variables. This corresponds to the situation described in Theorems~\ref{thm:miniBrw} and \ref{thm:miniLaw}.

When $\vartheta \geq \theta_0$, then the maximum of the last progeny modified BRW is generated by one of the leading particles, in a neighbourhood of $n \kappa'(\theta_0)$. This explains the shape of the limiting extremal process in the supercritical regime, which is obtained by perturbing the limiting extremal process of the BRW. In Theorem~\ref{thm:aboveBoundary}, the tail of the perturbations is so thin that only particles very close to the edge create a large perturbation, which yields the natural form of the limiting extremal process, obtained by perturbing  the limiting extremal process of the original BRW with i.i.d. random variables. In Theorem~\ref{thm:derBrw}, the main contribution near the tip of $\mathcal{E}_n$ comes from particles at distance~$\Theta(\sqrt{n})$ from the leading edge of the BRW, which are counted   using the derivative martingale of the BRW. This explains the extremal process, given by a Poisson point process with exponential intensity, shifted by $\frac{1}{\theta_0}\log Z_\infty$ instead of $\frac{1}{\theta_0}\log W_\infty(\theta_0)$, and the apparition of $L(\sqrt{n})$ in the centring term $m_n$, as the typical perturbations observed at generation $n$ make jumps of that size $z \in \Theta(\sqrt{n})$, with probability approximately~$L(z) \e^{-z}$.

Our proof techniques consist in validating this heuristic computation, by identifying the set of particles in the BRW that contribute to the generation of the tip of the point measure $\mathcal{E}_n$, then carefully proving the convergence towards the identified extremal process.

\subsection{Organization of the article.}
We introduce in the next section some estimates that allow us to study the Laplace transform of $\mathcal{E}_n$. We then use the convergence of this Laplace transform to prove our main theorems in Section~\ref{sec:proofs}.

\section{Laplace transform of the last progeny modified branching random walk}

To prove the convergence in distribution of $\tau_{-m_n} \mathcal{E}_n$ to a limiting point measure $\mathcal{Z}$ in law for the topology of vague convergence, it is sufficient to show that for any continuous compactly supported function $\phi$, we have
\[
  \lim_{n \to \infty} \crochet{\tau_{-m_n} \mathcal{E}_n,\phi} = \crochet{\mathcal{Z},\phi} \quad \text{ in law,}
\]
where we write $\crochet{\mathcal{X},\phi} = \int \phi \,\dd \mathcal{X}$. As a result, it is sufficient to prove the convergence of the Laplace transform of $\tau_{-m_n}\mathcal{E}_n$, defined as
\[
  \phi \in \mathcal{T} \mapsto \E\left[ \exp\left( - \crochet{\tau_{-m_n}\mathcal{E}_n,\phi} \right) \right],
\]
where $\mathcal{T}$ is the set of all non-negative continuous compactly supported functions, i.e., to show that for each $\phi \in \mathcal{T}$, we have
\begin{equation}
  \label{eqn:laplaceConvergence}
  \lim_{n \to \infty}\E\left[ \exp\left( - \crochet{\tau_{-m_n} \mathcal{E}_n,\phi} \right) \right] = \E\left[ \exp\left( - \crochet{\mathcal{Z},\phi} \right) \right].
\end{equation}
In the rest of the section, we introduce some methods and results allowing to study this Laplace transform.

Let $x \in \R$ and $n \in \N$. Using the independence between $(Y_u, u \in \mathcal{U})$ and the BRW, we observe that
\begin{equation}
  \label{eqn:lapTransform}
    \E\left[ \exp\left( - \crochet{\tau_x \mathcal{E}_n,\phi} \right) \right]
  =  \E\left[ \exp\left( -\sum_{|u|=n} g_\phi(x + S_u)\right) \right] =  \E\left[ \exp\left( - \crochet{\tau_x \mathcal{X}_n,g_\phi} \right) \right]
\end{equation}
where we have set
\[
  g_\phi : x \mapsto -\log \int \e^{-\phi(x + y)}\, \nu(\dd y),
\]
and we recall that $\mathcal{X}_n$ is the counting measure of the BRW at time $n$.

Applying Madaule's theorem (\citealp{Mad17}) to~\eqref{eqn:lapTransform}  yields Theorem~\ref{thm:aboveBoundary}  (see Section~\ref{subsec:above}).
 To prove Theorems~\ref{thm:miniBrw},~\ref{thm:miniLaw}, and~\ref{thm:derBrw}, we first show that $M_n - m_n$ converges to~$-\infty$ in probability, where we recall that $M_n := \max_{|u|=n} S_u$ (with $m_n$ as defined in each theorem), and then study the asymptotic behaviour of $g_\phi(x)$ as $x \to -\infty$.
 Let us first state the following generic lemma for BRW, which follows from \cite[Lemma 3.1]{Shi}, using the transformation $x \mapsto -\theta x + \kappa(\theta)$ on the underlying point process.
\begin{lemma}
\label{lem:asCv}
If $\kappa(\theta) < \infty$, then $\lim_{n \to \infty} \theta M_n - n \kappa(\theta) = -\infty$ a.s.
\end{lemma}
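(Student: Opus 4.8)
The plan is to reduce the statement to a result on the minimal position of an auxiliary branching random walk, following the route indicated above. Define the affine map $T(x) = -\theta x + \kappa(\theta)$ and let $V$ be the branching random walk obtained by applying $T$ to every displacement of $S$; concretely, $V_u = -\theta S_u + |u|\,\kappa(\theta)$ for every $u \in \calU$. Since $\theta > 0$, taking the minimum over $|u| = n$ turns the maximum of $S$ into
\[
  \min_{|u|=n} V_u = -\theta M_n + n\kappa(\theta) = -\bigl(\theta M_n - n\kappa(\theta)\bigr),
\]
so that the conclusion $\theta M_n - n\kappa(\theta) \to -\infty$ is equivalent to $\min_{|u|=n} V_u \to +\infty$ a.s. On the extinction event this is vacuous, so only the survival event matters.

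Next I would identify the log-Laplace transform of $V$. A direct computation gives, for $t$ such that the expression is finite,
\[
  \log \E\Bigl[ \sum_{|u|=1} \e^{-t V_u} \Bigr] = \kappa(\theta t) - t\,\kappa(\theta) =: \psi(t),
\]
and in particular $\psi(1) = 0$, which is exactly the normalization under which $V$ is critically tuned at parameter $1$. The walk $V$ inherits supercriticality from~\eqref{eqn:supercritical} and non-degeneracy from~\eqref{eqn:non-degenerate}, and $\psi(1) < \infty$ because $\kappa(\theta) < \infty$; no finiteness of $\kappa$ away from $\theta$ is needed. With these properties in hand, Shi's Lemma~3.1 (applied to $V$) yields $\min_{|u|=n} V_u \to +\infty$ a.s., which is the desired statement.

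The only genuine difficulty is concentrated in the boundary situation $\psi'(1) = \theta\kappa'(\theta) - \kappa(\theta) = 0$. When $\psi'(1) \neq 0$, the convexity of $\psi$ together with $\psi(1) = 0$ forces $\inf_t \psi(t) < 0$, so $V$ has strictly positive linear speed and $\min_{|u|=n} V_u \to +\infty$ at linear rate; equivalently $M_n/n \to v < \kappa(\theta)/\theta$ by the law of large numbers of~\cite{Big2}, whence $\theta M_n - n\kappa(\theta) = n\bigl(\theta M_n/n - \kappa(\theta)\bigr) \to -\infty$ since $\theta v - \kappa(\theta) < 0$. At the boundary the minimum grows only logarithmically and a soft argument is required: here $t = 1$ is the minimizer of $\psi$, the additive martingale $W_n(\theta) = \sum_{|u|=n}\e^{-V_u}$ fails to be uniformly integrable and hence tends to $0$ a.s. by~\cite{Big}, and the elementary bound $\e^{\theta M_n - n\kappa(\theta)} \le W_n(\theta)$ then gives $\theta M_n - n\kappa(\theta) \le \log W_n(\theta) \to -\infty$. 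Verifying that Shi's Lemma~3.1 indeed covers this critical regime under the sole hypothesis $\kappa(\theta) < \infty$ — rather than invoking the sharp $\tfrac{3}{2\theta}\log n$ asymptotics, which would demand extra integrability — is the step I would treat with the most care.
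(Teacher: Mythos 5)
Your proof is essentially the paper's own: the paper's entire argument for this lemma is exactly your reduction, namely applying the transformation $x \mapsto -\theta x + \kappa(\theta)$ to the underlying point process and invoking \cite[Lemma~3.1]{Shi}, which holds under the sole normalization $\E\bigl[\sum_{|u|=1} \e^{-V_u}\bigr] = 1$ (i.e.\ just $\kappa(\theta) < \infty$, plus supercriticality) and therefore already covers the boundary regime you worry about. One caveat on your supplementary case analysis: the claim that $\psi'(1) \neq 0$ together with convexity forces $\inf_t \psi(t) < 0$ (and likewise the deduction $v < \kappa(\theta)/\theta$ from Biggins' law of large numbers) tacitly assumes $\psi$ is finite at some $t \neq 1$, i.e.\ that $\kappa$ is finite in a one-sided neighbourhood of $\theta$, contradicting your own observation that no finiteness of $\kappa$ away from $\theta$ is needed --- but since that paragraph is only a fallback and Shi's lemma requires no such hypothesis, the gap is harmless.
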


Lemma~\ref{lem:asCv} is enough to conclude that $ M_n - m_n \to -\infty$ under the assumptions of Theorems~\ref{thm:miniBrw} and~\ref{thm:miniLaw}.
To obtain a similar result under the assumptions of Theorem~\ref{thm:derBrw}, we use the tightness of
\[
  M_n - n \kappa'(\theta)  + \frac{3}{2\theta} \log n,
\]
which was proved  in \cite{Mal16} under the assumptions\footnote{By \cite[Equation (B.1)]{Aid}, if $\kappa''(\theta_0) < \infty$ then~\eqref{eqn:ncsBoundary} implies \cite[Equation~(1.4)]{Mal16}.}
of Theorem~\ref{thm:derBrw}. Using the above observations, the asymptotic behaviour of the Laplace transform of $\mathcal{E}_n$ as $n \to \infty$ can be obtained by studying the asymptotic behaviour of $g_\phi$ under assumption~\eqref{eqn:rvNu} as $x \to -\infty$.
\begin{lemma}
\label{lem:estimate}
Let $\nu$ be a probability distribution satisfying~\eqref{eqn:rvNu}. For all $\phi \in \mathcal{T}$, we have
\[
  \lim_{x \to -\infty} \frac{\e^{-\theta x}}{L(-x)} g_\phi(x) = \int \theta \e^{-\theta z}(1 -\e^{-\phi(z)}) \,\dd z.
\]
\end{lemma}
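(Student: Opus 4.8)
The plan is to analyze the integral defining $g_\phi(x) = -\log \int \e^{-\phi(x+y)}\,\nu(\dd y)$ in the regime $x \to -\infty$, where the support of $\phi$ (being compactly supported, say contained in some $[-R,R]$) forces the relevant values of $y$ to lie in a window $y \in [-x - R, -x + R]$ around $-x$, which is far out in the right tail of $\nu$. Writing $1 - \e^{-\phi(x+y)}$ for the integrand contribution, I would first note that $\int \e^{-\phi(x+y)}\,\nu(\dd y) = 1 - \int (1 - \e^{-\phi(x+y)})\,\nu(\dd y)$, and since the latter integral tends to $0$, the approximation $-\log(1 - \epsilon) \sim \epsilon$ gives
\[
  g_\phi(x) \sim \int \left(1 - \e^{-\phi(x+y)}\right)\nu(\dd y) \quad \text{as } x \to -\infty.
\]
So the task reduces to computing the asymptotics of $\int (1 - \e^{-\phi(x+y)})\,\nu(\dd y)$ and showing it is asymptotically $\frac{L(-x)}{\e^{-\theta x}}\int \theta \e^{-\theta z}(1-\e^{-\phi(z)})\,\dd z$.

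Next I would perform the change of variable $z = x + y$, so that the integral becomes $\int (1 - \e^{-\phi(z)})\,\nu(\dd(z - x))$, a Lebesgue--Stieltjes integral against the shifted measure. The function $z \mapsto 1 - \e^{-\phi(z)}$ is continuous, non-negative, and compactly supported, which is exactly the setting in which tail asymptotics transfer to integral asymptotics. The key analytic input is the regularly varying exponential tail~\eqref{eqn:rvNu}: for $z$ in the fixed compact support of $\phi$, the density-like behaviour of $\nu$ near $-x$ is governed by $L(-x-z)\e^{-\theta(-x+z)} \approx L(-x)\e^{\theta x}\e^{-\theta z}$, using that $L(-x-z)/L(-x) \to 1$ uniformly for $z$ in a compact set (a consequence of the uniform convergence theorem for regularly varying functions, as found in~\cite{BGT}). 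This heuristic suggests the effective measure $\nu(\dd(z-x))$ behaves, after normalization by $L(-x)\e^{\theta x}$, like $\theta \e^{-\theta z}\,\dd z$ on the support of $\phi$.

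To make this rigorous I would integrate against the tail rather than a putative density, to avoid assuming $\nu$ has a density. Concretely, using Fubini / integration by parts, I would write the integral of the bounded, compactly supported test function $h(z) := 1 - \e^{-\phi(z)} \geq 0$ against the measure as a weighted combination of tail values $\nu([a - x, \infty))$ over the support, or express $\int h(z)\,\nu(\dd(z-x))$ via $\int h(z)\,\dd(-\nu([z - x,\infty)))$ and integrate by parts so that only $\nu$-tails and the derivative $h'$ (in a distributional/Stieltjes sense) appear. Dividing by $L(-x)\e^{\theta x}$ and applying~\eqref{eqn:rvNu} together with the uniform convergence of $L(-x-z)/L(-x) \to 1$ on the compact support, each tail ratio converges to $\e^{-\theta(\cdot)}$, yielding in the limit $\int \theta \e^{-\theta z} h(z)\,\dd z$ after reassembling the integration by parts. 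Dominated convergence, justified by the boundedness of $h$ and the uniform tail control on the compact support, closes the argument.

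The main obstacle I anticipate is the rigorous handling of the regularly varying factor $L$ inside the integral: one must upgrade the pointwise definition $L(\lambda x)/L(x) \to \lambda^\alpha$ to uniform control of $L(-x - z)/L(-x)$ over the compact $z$-range, and confirm this contributes a factor tending to $1$ (not $\lambda^\alpha$ with $\lambda \neq 1$) because the shift $z$ is bounded while $-x \to \infty$, so $(-x-z)/(-x) \to 1$. This uniform convergence on compacts is precisely the content of the uniform convergence theorem for regularly varying functions~\cite{BGT}, and invoking it carefully—while ensuring the integration-by-parts boundary terms vanish since $h$ has compact support—is the delicate point; the remaining exponential computation $\int \theta \e^{-\theta z}(1 - \e^{-\phi(z)})\,\dd z$ is then routine.
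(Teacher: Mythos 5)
Your proposal is correct in substance and follows the same two-step skeleton as the paper: first linearize the logarithm, $g_\phi(x) \sim \int (1-\e^{-\phi(x+y)})\,\nu(\dd y)$ as $x \to -\infty$ (justified exactly as you do, via dominated convergence and $-\log(1-\epsilon)\sim\epsilon$), and then show that the normalized shifted measure $\frac{\e^{-\theta x}}{L(-x)}\,\nu(\dd (z-x))$ converges vaguely to $\theta\e^{-\theta z}\,\dd z$, the key input in both cases being~\eqref{eqn:rvNu} combined with the Uniform Convergence Theorem of \cite{BGT}, applied as you indicate with $\lambda = (-x-z)/(-x) \to 1$ uniformly over the compact support. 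The only divergence is the mechanism for transferring the tail asymptotics to the integral: the paper computes the normalized mass of an arbitrary interval, $\frac{\e^{-\theta x}}{L(-x)}\nu([a-x,b-x)) \to \e^{-\theta a}-\e^{-\theta b}$, and then approximates the test function $\psi = 1-\e^{-\phi}$ by stair functions, whereas you integrate by parts against the tail $\nu([z-x,\infty))$. Be aware that your integration by parts, as stated, is not licit for general $\phi \in \mathcal{T}$: a continuous compactly supported $\phi$ need not have bounded variation, so $h = 1-\e^{-\phi}$ need not admit a derivative in any measure (Stieltjes) sense, and the identity $\int h(z)\,\nu(\dd(z-x)) = \int h'(s)\,\nu([s-x,\infty))\,\dd s$ can fail. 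This is easily repaired --- run your argument for smooth or BV test functions and then approximate a general $h$ uniformly, controlling the error with the interval-mass estimate, or use a layer-cake representation --- but note that any such repair essentially reintroduces the paper's stair-function approximation, so the two proofs ultimately rest on the same elementary step; your version buys nothing extra in generality, while the paper's avoids the regularity issue from the start.
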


\begin{proof}
Let $\phi \in \mathcal{T}$. We observe that we can rewrite
\[
  g_\phi(x) = -\log \left( 1 -\int 1 - \e^{-\phi(z+x)} \,\nu(\dd z)\right),
\]
and that $y \mapsto 1 - \e^{-\phi(y)}$ is continuous and compactly supported, i.e., an element of $\mathcal{T}$.
 By the dominated convergence theorem, we have that
\[
\lim_{x \to -\infty} \int 1 - \e^{-\phi(z+x)} \,\nu(\dd z) =0,
\]
which implies
\[
  g_\phi(x) \sim \int 1 - \e^{-\phi(z+x)} \,\nu(\dd z)\quad \text{ as $x \to -\infty$.}
\]
 Therefore, it is enough to show that
\begin{equation}
  \label{eqn:lemAim}
  \lim_{x \to -\infty} \int \frac{\e^{-\theta x}}{L(-x)} \psi(x + z) \,\nu (\dd z) = \int \theta \e^{-\theta z} \psi(z) \,\dd z
\end{equation}
 for all $\psi \in \mathcal{T}$ to prove Lemma~\ref{lem:estimate}.
 Using~\eqref{eqn:rvNu} together with the Uniform Convergence Theorem (\citealp[Theorem~1.5.2]{BGT}), we observe that for all $a < b$,
 \begin{align*}
 & \lim_{x \to -\infty} \frac{\e^{-\theta x}}{L(-x)} \nu([a-x,b-x)) \\
  &\qquad= \lim_{x \to -\infty} \left( \e^{-\theta a}\frac{L(a-x)}{L(-x)} \frac{\e^{\theta(a- x)}}{L(a-x)} \nu([a-x,\infty))
  - \e^{-\theta b}\frac{L(b-x)}{L(-x)} \frac{\e^{\theta(b- x)}}{L(b-x)} \nu([b-x,\infty)) \right)
  \\
  &\qquad=\e^{-\theta a} - \e^{-\theta b}=\int_a^b \theta \e^{-\theta z} \,\dd z,
 \end{align*}
from which we deduce~\eqref{eqn:lemAim}  by approaching $\psi$ by a family of
stair functions, thereby completing the proof.
\end{proof}

To simplify the notation, in the rest of the article, we write for all $\phi \in \mathcal{T}$ and $\theta > 0$
\[
  c_\phi(\theta) = \int \theta \e^{-\theta z}(1 -\e^{-\phi(z)}) \,\dd z.
\]
We can then restate Lemma~\ref{lem:estimate} as $g_\phi(x) \sim c_\phi(\theta)L(-x) \e^{\theta x}$ as $x \to -\infty$ under assumption~\eqref{eqn:rvNu}.

Similarly to Lemma~\ref{lem:estimate}, we observe that an upper bound for the right tail of $\nu$ implies a similar upper bound for $g_\phi$ as $x \to -\infty$.
\begin{lemma}
\label{lem:estimateBound}
Let $\nu$ be a probability distribution satisfying~\eqref{eqn:NuBounded}. For all $\phi \in \mathcal{T}$, there exists $C' > 0$ such that for all $x \in \R$,
\[
  g_\phi(x) \leq C' \e^{\theta x}.
\]
\end{lemma}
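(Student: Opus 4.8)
The plan is to mirror the reduction used in Lemma~\ref{lem:estimate}. Fix $\phi \in \mathcal{T}$ and write
\[
  g_\phi(x) = -\log\left( 1 - t(x) \right), \qquad t(x) := \int \left( 1 - \e^{-\phi(x+y)} \right) \nu(\dd y).
\]
The function $\psi := 1 - \e^{-\phi}$ belongs to $\mathcal{T}$, satisfies $0 \leq \psi < 1$ everywhere, and is supported on the support $[a,b]$ of $\phi$. Since $g_\phi \geq 0$ and $\e^{\theta x}$ is bounded below away from $0$ on any half-line $[x_0,\infty)$, the only delicate region is $x \to -\infty$, but I will produce a single constant $C'$ valid for all $x \in \R$ by combining a tail estimate for $t(x)$ with a linear bound on $-\log(1-t)$.

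\textbf{Bounding $t(x)$.} Because $\psi \leq 1$ and $\psi$ vanishes outside $[a,b]$, I would estimate $\psi(x+y) \leq \ind{x+y \in [a,b]}$, whence
\[
  t(x) \leq \nu\bigl( [a-x, b-x] \bigr) \leq \nu\bigl( [a-x,\infty) \bigr) \leq C \e^{-\theta(a-x)} = C \e^{-\theta a}\, \e^{\theta x},
\]
using assumption~\eqref{eqn:NuBounded}. This already gives the desired exponential decay of $t(x)$; what remains is to transfer it through the logarithm without incurring a blow-up.

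\textbf{Controlling the logarithm.} For this I would first check that $t_* := \sup_{x \in \R} t(x) < 1$. Since $\psi$ is bounded and continuous, dominated convergence shows $t$ is continuous, and the sliding window $[a-x,b-x]$ escapes to $+\infty$ as $x \to -\infty$ and to $-\infty$ as $x \to +\infty$, so $t(x) \to 0$ at both ends; hence $t$ attains its supremum at some $x_*$. At that point
\[
  1 - t(x_*) = \int \e^{-\phi(x_*+y)}\, \nu(\dd y) > 0,
\]
as the integrand is strictly positive, so $t_* < 1$. Using convexity of $s \mapsto -\log(1-s)$ together with $-\log 1 = 0$, the chord bound $-\log(1-s) \leq \frac{-\log(1-t_*)}{t_*}\, s =: K s$ holds for all $s \in [0,t_*]$. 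Combining with the tail estimate yields $g_\phi(x) = -\log(1-t(x)) \leq K\, t(x) \leq K C \e^{-\theta a}\, \e^{\theta x}$, so that $C' := K C \e^{-\theta a}$ works.

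\textbf{Main obstacle.} The only genuinely non-routine point is the uniform bound $t_* < 1$ of the third step: the tail estimate~\eqref{eqn:NuBounded} controls $t(x)$ only for $x \to -\infty$, whereas for moderate $x$ the window $[a-x,b-x]$ may sit where $\nu$ carries most of its mass, and a pointwise bound $t(x) < 1$ does not by itself prevent $-\log(1-t(x))$ from being arbitrarily large. Establishing the \emph{uniform} separation from $1$ via continuity and the decay of $t$ at $\pm\infty$ is what makes the single constant $C'$ available, and is the crux of the argument.
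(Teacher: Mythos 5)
Your proof is correct, and its core tail estimate --- bounding $t(x)$ by the $\nu$-mass of the window $[a-x,b-x]$ and then by $C\e^{-\theta a}\e^{\theta x}$ via~\eqref{eqn:NuBounded} --- is the same one the paper uses, just written in complementary form: the paper bounds $\e^{-g_\phi(x)} = \int \e^{-\phi(x+z)}\,\nu(\dd z) \geq \nu((-\infty,-x-B)) \geq 1 - C\e^{\theta(x+B)}$ and deduces $\limsup_{x\to-\infty} \e^{-\theta x} g_\phi(x) \leq C\e^{\theta B}$. Where you diverge is precisely at what you call the crux, the uniform separation $t_* := \sup_{x} t(x) < 1$. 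Your continuity-plus-decay-at-$\pm\infty$ compactness argument is valid, but it is unnecessary: since $0 \leq \phi \leq \|\phi\|_\infty$, one has $1 - t(x) = \int \e^{-\phi(x+y)}\,\nu(\dd y) \geq \e^{-\|\phi\|_\infty}$ for \emph{every} $x$, hence $t_* \leq 1 - \e^{-\|\phi\|_\infty} < 1$ in one line --- this is exactly the paper's closing remark that ``$g_\phi$ is bounded'', which it combines with the limsup estimate to get a global constant. Your chord bound $-\log(1-s) \leq K s$ on $[0,t_*]$ with $K = -\log(1-t_*)/t_*$ is a clean way to produce a single constant valid on all of $\R$ in one stroke (arguably tidier than limsup-then-patch), but note the degenerate case $t_* = 0$, where $K$ is of the form $0/0$; there $g_\phi \equiv 0$ and the lemma is trivial, so this should be stated explicitly. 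With these two small remarks, your argument matches the paper's in substance.
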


\begin{proof}
Let $\phi \in \mathcal{T}$, as $\phi$ is compactly supported, there exists $B > 0$ such that $\phi(z) = 0$ for all $z < -B$. Therefore,
\[
  \e^{-g_\phi(x)} = \int \e^{-\phi(x+z)} \,\nu(\dd z) \geq \nu((-\infty,-x-B)) \geq 1 - C \e^{\theta (x+B)}.
\]
As a result, we deduce that
\[
  \limsup_{x \to -\infty} \e^{-\theta x} g_\phi(x) \leq \lim_{x \to-\infty} -\e^{-\theta x} \log \left(1 - C \e^{\theta(x+B)}\right) = C \e^{\theta B}.
\]
Using that $g_\phi$ is bounded, the proof is now complete.
\end{proof}

To complete the proofs of Theorems~\ref{thm:miniBrw},~\ref{thm:miniLaw} and~\ref{thm:derBrw}, it will be enough to show that
\[
  \sum_{|u|=n} g_\phi(S_u - m_n) \text{ converges in probability},
\]
and to identify its limit. This is mainly done using the so-called many-to-one lemma (see~\citealp[Theorem~1.1]{Shi}), that we now state.
\begin{lemma}
\label{lem:manytoone}
Let $\theta > 0$ such that $\kappa(\theta) < \infty$. There exists a random walk $(T_n, n \geq 0)$ such that for all measurable non-negative function $f$, we have
\[
  \E\left[ \sum_{|u|=n} \e^{\theta S_u - n \kappa(\theta)} f(S_{u_1},\ldots,S_{u_n}) \right] = \E\left[ f(T_1,\ldots, T_n) \right].
\]
Moreover, $\E[T_1] = \kappa'(\theta)$ whenever this quantity is well-defined.
\end{lemma}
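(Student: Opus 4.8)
The plan is to construct the random walk $(T_n)$ explicitly via an exponential change of measure of the reproduction law, and then verify the stated identity by induction on $n$ using the branching property of the BRW.

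First I would define the law of the generic increment $X$ of the walk by setting, for every non-negative measurable function $h$,
\[
  \E[h(X)] := \E\left[\sum_{|u|=1} h(S_u)\,\e^{\theta S_u - \kappa(\theta)}\right].
\]
Taking $h \equiv 1$ and recalling that $\kappa(\theta) = \log \E[\sum_{|u|=1}\e^{\theta S_u}]$ shows that the right-hand side equals $1$, so this defines a genuine probability measure, and the hypothesis $\kappa(\theta)<\infty$ guarantees it is well-defined. I would then let $(T_n, n\geq0)$ be the random walk $T_n = X_1 + \cdots + X_n$ with $(X_i)$ i.i.d. of the law of $X$. Applying the definition with $h$ the identity (licit whenever the resulting integral is absolutely convergent) gives $\E[T_1] = \E[X] = \E[\sum_{|u|=1} S_u \e^{\theta S_u - \kappa(\theta)}] = \kappa'(\theta)$, matching the definition of $\kappa'(\theta)$ recalled in the introduction, whenever that quantity is well-defined.

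The identity $\E[\sum_{|u|=n} \e^{\theta S_u - n \kappa(\theta)} f(S_{u_1},\ldots,S_{u_n})] = \E[f(T_1,\ldots,T_n)]$ I would establish by induction on $n$. The base case $n=1$ is precisely the definition of the law of $X$. For the inductive step, I would split the sum over generation $n+1$ according to the first-generation ancestor $v = u_1$ and invoke the branching property: conditionally on the first generation, the subtrees rooted at the particles $v$ with $|v|=1$ are independent copies of the original BRW, shifted by $S_v$. Writing $g(x) := \E[f(x, x+T_1, \ldots, x+T_n)]$ and applying the inductive hypothesis inside each subtree, the conditional contribution of $v$ reduces to $\e^{\theta S_v - \kappa(\theta)} g(S_v)$. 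Summing over $v$ and applying the $n=1$ identity to the function $g$ yields $\E[g(X)] = \E[g(T_1)]$. Since the increments of the walk are i.i.d., one has $(T_1,\, T_1 + T_1',\, \ldots,\, T_1 + T_n') \egaldistr (T_1,\ldots,T_{n+1})$ for an independent copy $(T_k')$ of the walk, which identifies $\E[g(T_1)]$ with $\E[f(T_1,\ldots,T_{n+1})]$ and closes the induction.

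The only genuinely delicate point is the bookkeeping in the inductive step: one must carefully match the ancestral line $(S_{u_1},\ldots,S_{u_{n+1}})$ of a generation-$(n+1)$ particle with the shifted ancestral line $(S_v,\, S_v + \widetilde{S}_{w_1},\, \ldots,\, S_v + \widetilde{S}_{w_n})$ of the corresponding generation-$n$ particle $w$ in the subtree rooted at $v$, and to justify interchanging the expectation with the (possibly infinite) sum over children. This interchange is licit by Tonelli's theorem, precisely because $f$ and the exponential weights are non-negative; for the same reason no integrability assumption beyond $\kappa(\theta)<\infty$ is required.
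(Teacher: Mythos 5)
Your proposal is correct; the paper does not actually prove this lemma but quotes it from \citealp[Theorem~1.1]{Shi}, and your argument --- defining the increment law by tilting the first-generation intensity with the weight $\e^{\theta S_u - \kappa(\theta)}$, then inducting on $n$ via the branching property, with Tonelli justifying all interchanges --- is essentially the standard proof given in that reference. One cosmetic refinement: to obtain $\E[T_1] = \kappa'(\theta)$ in the full generality of ``whenever this quantity is well-defined'' (which in the paper's convention includes the extended sense where one of the two parts may be infinite), apply the identity for non-negative functions separately to $x \mapsto x_+$ and $x \mapsto x_-$ rather than to the identity function under an absolute-convergence proviso.
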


\section{Proof of the theorems}
\label{sec:proofs}

We prove in this section our main theorems. We first consider the asymptotic behaviour of $\mathcal{E}_n$ below the boundary case, i.e., when $\theta < \theta_0$, assuming that this quantity is well-defined. We then turn to the proof of Theorem~\ref{thm:derBrw}, i.e., assuming that $\theta  =\theta_0$. Finally, we prove Theorem~\ref{thm:aboveBoundary} in Section~\ref{subsec:above}.

\subsection{BRW below the boundary case}

In this section, $\theta$ is a fixed positive constant, and we assume that $\kappa(\theta) < \infty$ and $(W_n(\theta), n \geq 0)$ is uniformly integrable. We denote by $W_\infty(\theta) = \lim_{n \to \infty} W_n(\theta)$ the almost sure limit of this martingale. We start by proving Theorem~\ref{thm:miniBrw}.
\begin{proof}[Proof of Theorem~\ref{thm:miniBrw}]
Let $\phi \in \mathcal{T}$, using Lemma~\ref{lem:estimate}, under assumption~\eqref{eqn:expTail}, for all $\epsilon > 0$, there exists $A > 0$ such that for all $x \leq -A$, we have
\begin{equation}
  \label{eqn:abound}
  \left|g_\phi(x) - c_\phi(\theta) L \e^{\theta x} \right| \leq \epsilon \e^{\theta x}.
\end{equation}
Observe that for all $a, b \geq 0$, we have $|\e^{-a} - \e^{-b}| \leq |a - b|\wedge 1$ using that $\exp$ is $1$-Lipschitz on $\R_-$. Therefore,
\begin{align*}
  \E \left[  \left| \e^{-\crochet{\tau_{-m_n} \mathcal{X}_n ,g_\phi}} - \e^{-c_\phi(\theta)W_n(\theta)}  \right| \right]
  &\leq  \E\left[  \left| \crochet{\tau_{-m_n} \mathcal{X}_n ,g_\phi} - c_\phi(\theta) W_n(\theta)  \right| \wedge 1   \right] \\
  &\leq  \E\left[ \left| \sum_{|u|=n} h_\phi(S_u-m_n) \right| \wedge 1 \right],
\end{align*}
where $h_\phi(x) = g_\phi(x) - c_\phi(\theta) L \e^{\theta x}$, using that $\e^{-\theta m_n} = L^{-1}\e^{-n \kappa(\theta)}$. By~\eqref{eqn:abound}, we have
\[
  \E\left[ \ind{M_n \leq m_n - A}\left| \sum_{|u|=n} h_\phi(S_u-m_n) \right| \wedge 1 \right] \leq \epsilon L^{-1}   \E[W_n(\theta)],
\]
hence
\begin{equation*}
  \E \left[  \left| \e^{-\crochet{\tau_{-m_n}\mathcal{X}_n ,g_\phi}} - \e^{-c_\phi(\theta)W_n(\theta)}  \right| \right]
  \leq \epsilon   L^{-1}  + \P(M_n \geq m_n - A).
\end{equation*}
Letting $n \to \infty$ and then $\epsilon \downarrow 0$, we conclude, by first using Lemma~\ref{lem:asCv} and then using Lebesgue's dominated convergence theorem and~\eqref{eqn:lapTransform}, that 
\begin{align*}
  \lim_{n \to \infty} \E\left[\e^{-\crochet{\tau_{-m_n}\mathcal{E}_n,\phi}} \right] &=  \lim_{n \to \infty} \E\left[\e^{-c_\phi(\theta)\sum_{|u|=n} \e^{\theta S_u -n \kappa(\theta)}} \right]\\
  &=\E\left[ \exp\left( - W_\infty(\theta) \int \theta \e^{-\theta z}(1 - \e^{-\phi(z)}) \,\dd z \right)\right].
\end{align*}
To complete the proof, it is then enough to observe that this limit is the Laplace transform of the PPP($\theta W_\infty(\theta) \e^{-\theta z} \,\dd z$).
\end{proof}

Theorem~\ref{thm:miniLaw} follows from very similar computations. We use the extra integrability condition $\kappa(\theta + \delta) < \infty$ to guarantee that under our stated assumptions, $M_n - m_n$ almost surely decays linearly, and the condition $\kappa(\theta - \delta) < \infty$ to control the contributions of particles far from position $m_n$.
\begin{proof}[Proof of Theorem~\ref{thm:miniLaw}]
We compute the asymptotic behaviour of the Laplace transform of $\tau_{-m_n} \mathcal{E}_n$ using similar computations as for the proof of Theorem~\ref{thm:miniBrw}. We recall that for all $\phi \in \mathcal{T}$, we have
\[
  \E\left[ \exp\left( -\crochet{\tau_{-m_n}\mathcal{E}_n,\phi}\right)\right] = \E\left[ \exp\left(-\sum_{|u|=n} g_\phi(S_u-m_n)\right) \right],
\]
and that $c_1 = \left(\frac{\kappa(\theta)}{\theta} - \kappa'(\theta) \right)^\alpha$. Let $ \eta > 0$ such that $4\eta < \frac{\kappa(\theta)}{\theta}-\kappa'(\theta)$. Using Lemma~\ref{lem:estimate} and the regular variations of $L$ at $\infty$, we deduce  that almost surely, for all $n$ large enough,
\begin{align*}
  &(1 - \tfrac{2}{c_1^{1/\alpha}} \eta)^{|\alpha|}c_1 c_\phi(\theta)\sum_{|u|=n} \e^{\theta S_u-n \kappa(\theta)}\ind{|S_u-n \kappa'(\theta)| \leq n \eta}\\
  &\qquad\leq \sum_{|u|=n} g_\phi(S_u-m_n) \ind{|S_u - n \kappa'(\theta)|\leq n \eta}\\
  &\qquad\qquad\leq (1 - \tfrac{2}{c_1^{1/\alpha}} \eta)^{-|\alpha|}c_1 c_\phi(\theta)\sum_{|u|=n} \e^{\theta S_u-n \kappa(\theta)}\ind{|S_u-n \kappa'(\theta)| \leq n \eta}.
\end{align*}
Here we use the Uniform Convergence Theorem (\citealp[Theorem~1.5.2]{BGT}), which states that $\lim_{z \to \infty} \frac{L(\lambda z)}{L(z)} = \lambda^{\alpha}$ holds uniformly on compact subsets of $(0,\infty)$. Note that there exists $r > 0$ so that
\[
  [(1 - \tfrac{3}{c_1^{1/\alpha}} \eta)^{|\alpha|},(1-\tfrac{3}{c_1^{1/\alpha}}\eta)^{-|\alpha|}] \subset [1 - r\eta,1+r\eta].
\]

Using Lemma~\ref{lem:manytoone}, together with the law of large numbers  for the random walk $(T_n, n\geq0)$, we observe that
\begin{equation}
 \lim_{n\to\infty} \E\left[  \sum_{|u|=n} \e^{\theta S_u-n \kappa(\theta)}\ind{|S_u-n \kappa'(\theta)| > n \eta} \right] = \lim_{n\to\infty}\P\left( \left|T_n - \E[T_n]\right| > n \eta\right) =0.
\label{eqn:errorterm0}
 \end{equation}
Consequently, using the a.s. convergence of $W_n(\theta)$ to $W_\infty(\theta)$, we deduce that
\[
  \lim_{n \to \infty} \sum_{|u|=n} \e^{\theta S_u-n \kappa(\theta)}\ind{|S_u-n \kappa'(\theta)| \leq n \eta} = W_\infty(\theta) \quad \text{ in probability.}
\]
Hence, we conclude that
\[
   \P\left( \left| \sum_{|u|=n} \! g_\phi(S_u-m_n) \ind{|S_u - n \kappa'(\theta)|\leq n \eta}\! - c_1 c_\phi(\theta) W_\infty(\theta)\right| > r \eta c_1 c_\phi(\theta) W_\infty(\theta) \right)
\]
converges to $0$ as $n \to \infty$. In order to show that $\sum_{|u|=n} g_\phi(S_u-m_n)$ converges to $c_1 c_\phi(\theta) W_\infty(\theta)$ in probability and complete the proof of Theorem~\ref{thm:miniLaw}, we now show that
\begin{equation}
\lim_{n \to \infty} \sum_{|u|=n} g_\phi(S_u - m_n) \ind{|S_u-n\kappa'(\theta)| > n\eta}  = 0 \quad \text{ in probability.}
 \label{eqn:errorterm}
 \end{equation}

On the one hand,  using that $\vartheta \mapsto \kappa(\vartheta)$ is $\mathcal{C}^2$ on $(\theta - \delta,\theta + \delta)$, and that $\theta \kappa'(\theta) - \kappa(\theta) < 0$, we observe that $\kappa(\vartheta)/\vartheta$ is decreasing on $[\theta,\theta + \delta]$ for sufficiently small $\delta$,  which implies $\frac{\kappa(\theta)}{\theta}>\frac{\kappa(\theta+\delta)}{\theta+\delta}$. Hence, by Lemma~\ref{lem:asCv}, there exists $\epsilon > 0$ such that
\[
  \lim_{n \to \infty} M_n - n\left(\frac{\kappa(\theta)}{\theta} - \epsilon\right) = -\infty \quad \text{a.s.}
\]
As $\log L(n)/n \to 0$ as $n \to \infty$, we conclude that almost surely, for all $n$ large enough, $M_n - m_n \leq - \epsilon n/2$. Therefore, almost surely, for $n$ large enough
\begin{align*}
  &\sum_{|u|=n} g_\phi(S_u-m_n) \ind{S_u - n \kappa'(\theta) > n \eta}\\
  &\qquad = \sum_{|u|=n} g_\phi(S_u-m_n) \ind{S_u - n \kappa'(\theta) > n \eta, S_u-m_n < - n\epsilon /2}\\
 & \qquad \leq   2c_\phi(\theta) \sum_{|u|=n} L(m_n - S_u) \frac{\e^{\theta S_u - n \kappa(\theta)}}{L(n)} \ind{S_u - n \kappa'(\theta) > n \eta, S_u-m_n < - n\epsilon /2}.
\end{align*}
Therefore, using again the Uniform Convergence Theorem (\citealp[Theorem~1.5.2]{BGT}), there exists a constant $C$ depending on $\epsilon$ and $\eta$ such that almost surely, for all $n$ large enough
\[
  \sum_{|u|=n} g_\phi(S_u-m_n) \ind{S_u - n \kappa'(\theta) > n \eta} \leq C \sum_{|u|=n} \e^{\theta S_u - n \kappa(\theta)} \ind{S_u-n \kappa'(\theta) > n\eta}.
\]
This, together with~\eqref{eqn:errorterm0}, implies that
\begin{equation}
  \label{eqn:above}
  \lim_{n \to \infty} \sum_{|u|=n} g_\phi(S_u-m_n) \ind{S_u - n \kappa'(\theta) > n \eta} = 0 \quad \text{ in probability.}
\end{equation}
With similar computations, we observe that for all $B > 0$, we have
\begin{equation}
  \label{eqn:semibelow}
  \lim_{n \to \infty} \sum_{|u|=n} g_\phi(S_u-m_n)\ind{S_u - n \kappa'(\theta) < - n \eta} \ind{S_u - m_n > - n B} = 0 \quad \text{ in probability.}
\end{equation}

On the other hand, using that $\lim_{x \to \infty} x^{-1}\log L(x) =0$, we observe that for all $x$ large enough, we have
\[
  g_\phi(-x) \leq 2 c_\phi(\theta) L(x) \e^{-\theta x} \leq 4 c_\phi(\theta) \e^{-(\theta - \delta/2)x}.
\]
Therefore, for all $n$ large enough, we have
\begin{align*}
  \sum_{|u|=n} g_\phi(S_u-m_n) \ind{S_u-m_n < -nB}
  &\leq 4 c_\phi(\theta) \sum_{|u|=n} \e^{(\theta - \delta/2) (S_u - m_n)} \ind{S_u - m_n < -nB}\\
  &\leq 4 c_\phi(\theta) W_n(\theta-\delta) \e^{n \kappa(\theta-\delta) - (\theta -\delta) m_n - \delta n B/2}.
\end{align*}
Using that $W_n(\theta - \delta)$ converges almost surely, we observe that for $B$ large enough, we have
\begin{equation}
  \label{eqn:fullbelow}
  \lim_{n \to \infty} \sum_{|u|=n} g_\phi(S_u-m_n) \ind{S_u - m_n< - n B} = 0 \quad \text{ in probability.}
\end{equation}

Consequently, combining~\eqref{eqn:above},~\eqref{eqn:semibelow} and~\eqref{eqn:fullbelow}, we get~\eqref{eqn:errorterm}, which implies
\[
  \lim_{n \to \infty} \sum_{|u|=n} g_\phi(S_u - m_n) = c_1 c_\phi(\theta) W_\infty(\theta) \quad \text{ in probability.}
\]
Using the dominated convergence theorem together with~\eqref{eqn:lapTransform}, we conclude that for all $\phi \in \mathcal{T}$,
\[
  \lim_{n \to \infty} \E\left[\e^{-\crochet{\tau_{-m_n}\mathcal{E}_n,\phi}} \right] = \E\left[ \exp\left( - c_1 W_\infty(\theta)c_\phi(\theta) \right)\right].
\]
We can now complete the proof, observing that the right-hand side is the Laplace transform of the PPP($c_1\theta W_\infty(\theta) \e^{-\theta x} \,\dd x$).
\end{proof}

\subsection{BRW in the boundary case}

We turn in this section to the proof of Theorem~\ref{thm:derBrw}. The proof follows a similar scheme as the one used above, with the added introduction of a shaving procedure. More precisely, for all $A > 0$, we set
\[ 
  \mathcal{G}_n(A) = \{u\in\calU : |u|=n ,\; S_{u_k} \leq k \kappa'(\theta) + A  \quad \forall\,  k \leq n \}, 
\]
the set of particles that stayed at all times below the line $x \mapsto x \kappa'(\theta) + A$. Using that $\lim_{n \to \infty} M_n - n \kappa'(\theta) = -\infty$ a.s. we observe that almost surely, for $A$ large enough, we have $\mathcal{G}_n(A) = \{u\in\calU : |u|=n\}$. In other words, writing
\[
  \mathcal{S}_A := \{\sup_{n \in \N} M_n - n \kappa'(\theta) < A\},
\]
we observe that on the event $\mathcal{S}_A$, we have $\mathcal{G}_n(A) = \{u\in\calU : |u|=n\}$ for all $n \in \N$, and that $\lim_{A\to \infty} \P(\mathcal{S}_A) = 1$.

As a first step towards the proof of Theorem~\ref{thm:derBrw}, we show that no particle in $\mathcal{G}_n(A)$ above position $m_n - \epsilon n^{1/2}$ contributes to the extremal process with high probability, as soon as $\epsilon>0$ is small enough.
\begin{lemma}
\label{lem:lemma}
Under the conditions and notation of Theorem~\ref{thm:derBrw}, for all $B > 0$, we have
\[
  \lim_{\epsilon \downarrow 0} \limsup_{n \to \infty} \P(  \exists u\in\calU : |u|=n , S_u \geq m_n - \epsilon n^{1/2}, S_u + Y_u \geq m_n - B) = 0.
\]
\end{lemma}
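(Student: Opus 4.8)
The plan is to run a first-moment (union-bound) argument, but over the shaved family $\calG_n(A)$ rather than over all particles; restricting to trajectories that stay below the line $k\mapsto k\kappa'(\theta)+A$ is what prevents the overcounting produced by atypically high paths. Since $\calG_n(A)=\{u\in\calU:|u|=n\}$ on $\mathcal{S}_A$ and $\P(\mathcal{S}_A)\to1$ as $A\to\infty$, it suffices to bound, for each fixed $A$, the quantity
\[
  p_n(A,\epsilon):=\P\big(\exists\,u\in\calG_n(A):S_u\geq m_n-\epsilon n^{1/2},\ S_u+Y_u\geq m_n-B\big),
\]
and then to send $n\to\infty$, $\epsilon\downarrow0$ and $A\to\infty$, in that order. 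The reduction to $\calG_n(A)$ costs only $\P(\mathcal{S}_A^c)$, which disappears in the last limit.

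Conditioning on the BRW and using that $(Y_u)$ are i.i.d.\ of law $\nu$ and independent of it, a union bound gives
\[
  p_n(A,\epsilon)\leq\E\Big[\sum_{u\in\calG_n(A)}\ind{S_u\geq m_n-\epsilon n^{1/2}}\,\nu([m_n-B-S_u,\infty))\Big].
\]
Writing each summand as $e^{\theta S_u-n\kappa(\theta)}$ times a path functional and applying the many-to-one lemma (Lemma~\ref{lem:manytoone}) with the barrier folded into that functional, the right-hand side equals
\[
  \E\Big[e^{n\kappa(\theta)-\theta T_n}\,\nu([m_n-B-T_n,\infty))\,\ind{T_n\geq m_n-\epsilon n^{1/2}}\,\ind{T_k\leq k\kappa'(\theta)+A\ \forall k\leq n}\Big],
\]
where $(T_n)$ is the associated random walk with $\E[T_1]=\kappa'(\theta)$ and increment variance $\kappa''(\theta)<\infty$.

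I would split this expectation according to $x:=m_n-T_n$. On the main range $A_0\leq x\leq\epsilon n^{1/2}$, with $A_0$ a large constant, the argument $m_n-B-T_n=x-B$ is large, so \eqref{eqn:rvNu} yields $\nu([x-B,\infty))\leq2L(x-B)e^{-\theta(x-B)}$. Here the critical identity $\theta\kappa'(\theta)=\kappa(\theta)$ is decisive: the factor $e^{-\theta T_n}$ from the change of measure cancels the $e^{\theta T_n}$ coming from the tail of $\nu$, leaving only $e^{n\kappa(\theta)-\theta m_n}=n^{1/2}/L(n^{1/2})$ (computed from the exact form of $m_n$ and $m_n-n\kappa'(\theta)=O(\log n)$). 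To control what remains I would invoke a ballot-type estimate for the centred walk $T_k-k\kappa'(\theta)$: the probability that it stays below $A$ up to time $n$ and ends near $-x$ is at most a constant times $(A+1)(x+1)\,n^{-3/2}$, the linear-in-$x$ factor being the crucial gain over the free local limit theorem. After the substitution $x=t\,n^{1/2}$ and an application of the Uniform Convergence Theorem (\citealp[Theorem~1.5.2]{BGT}) replacing $L(n^{1/2}t)/L(n^{1/2})$ by $t^\alpha$, this contribution is bounded by a constant (depending on $A,B$) times $\int_0^\epsilon t^{\alpha+1}\,\dd t\asymp\epsilon^{\alpha+2}$; the hypothesis $\alpha>-2$ is precisely what makes $t^{\alpha+1}$ integrable at $0$, so this vanishes as $\epsilon\downarrow0$. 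On the complementary range $x<A_0$, which collects the particles within $O(\log n)$ of $m_n$, the tail asymptotic no longer applies and I would simply bound $\nu(\cdot)\leq1$; the contribution is then dominated by the expected number of particles of $\calG_n(A)$ above $m_n-A_0$, and this tends to $0$ because $m_n$ lies above the typical maximum $M_n\approx n\kappa'(\theta)-\tfrac{3}{2\theta}\log n$ by a positive multiple of $\log n$ (again using $\alpha>-2$): the tightness of $M_n-n\kappa'(\theta)+\tfrac{3}{2\theta}\log n$ from \cite{Mal16}, together with $e^{n\kappa(\theta)-\theta n\kappa'(\theta)}=1$, forces a decay of order $n^{-(\alpha+2)/2}$ up to logarithmic factors. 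Combining the two ranges gives $\limsup_{n\to\infty}p_n(A,\epsilon)\leq C_{A,B}\,\epsilon^{\alpha+2}$, hence $\lim_{\epsilon\downarrow0}\limsup_{n\to\infty}p_n(A,\epsilon)=0$ for every $A$, and the lemma follows on letting $A\to\infty$.

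The main obstacle is the random-walk-below-a-barrier (ballot) estimate, and above all its uniformity in $x$ throughout the window $(0,\epsilon n^{1/2}]$: it is exactly the linear-in-$x$ prefactor that both cancels the spurious $n^{1/2}$ blow-up of the naive (unshaved) first moment and upgrades the non-integrable $t^\alpha$ to the integrable $t^{\alpha+1}$. A secondary difficulty is that the tail asymptotic \eqref{eqn:rvNu} is only available for large arguments, which is why the near-tip particles ($x<A_0$) must be peeled off and handled by the cruder bound, leaning on the tightness of the maximum to show that they do not contribute.
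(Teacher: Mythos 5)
Your proposal follows the same architecture as the paper's proof: shaving to $\calG_n(A)$ on the event $\mathcal{S}_A$, a union bound combined with the tail estimate~\eqref{eqn:rvNu}, the many-to-one lemma converting the first moment into a barrier event for the centred walk $\hat{T}_k = T_k - k\kappa'(\theta)$, a ballot estimate whose linear-in-endpoint factor upgrades $t^\alpha$ to the integrable $t^{\alpha+1}$, and a final bound of order $\epsilon^{\alpha+2}$. Your treatment of the near-tip window $x < A_0$ (crude bound $\nu \leq 1$ plus the fact that $m_n$ sits about $\frac{\alpha+2}{2\theta}\log n$ above the typical maximum level $n\kappa'(\theta) - \frac{3}{2\theta}\log n$) is a legitimate variant of the paper's cutoff at $m_n - a_n + \epsilon^{-1}$ via \cite[Theorem~1.1]{Mal16}, and the decay rate $n^{-(\alpha+2)/2}$ up to slowly varying factors that you claim does check out.

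There is, however, one genuine gap, at exactly the point where the paper spends its main technical effort: the step ``an application of the Uniform Convergence Theorem replacing $L(n^{1/2}t)/L(n^{1/2})$ by $t^\alpha$''. The UCT (\citealp[Theorem~1.5.2]{BGT}) gives uniformity of $L(\lambda x)/L(x) \to \lambda^\alpha$ only for $\lambda$ in compact subsets of $(0,\infty)$, whereas your window is $\lambda = x/n^{1/2} \in [A_0 n^{-1/2}, \epsilon]$, whose lower end tends to $0$ as $n \to \infty$. The replacement can genuinely fail there: for $L(x) = x^\alpha/\log x$ and $\lambda n^{1/2} = A_0$ fixed, one has $L(\lambda n^{1/2})/L(n^{1/2}) = \lambda^\alpha \cdot \frac{\log n}{2 \log A_0}$, which exceeds $\lambda^\alpha$ by an unbounded factor, so no bound of the form $L(\lambda n^{1/2})/L(n^{1/2}) \leq C\lambda^\alpha$ holds uniformly over the window. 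The repair is a Potter-type bound: either invoke \citealp[Theorem~1.5.6]{BGT} directly, or do as the paper does and introduce $\mathfrak{L}(x) = x^{\rho-\alpha} L(x)$ with $\rho \in (0, \alpha+2)$, which is regularly varying of \emph{positive} index, for which the UCT does hold uniformly on $\lambda \in (0, 2\epsilon]$; this yields $L(\lambda x)/L(x) \leq \lambda^\alpha + \delta \lambda^{\alpha-\rho}$, and since $\alpha - \rho > -2$ the correction term feeds through the same ballot computation to contribute only $O(\delta\, \epsilon^{\alpha-\rho+2})$. Without some such device your bound on the lower part of the window is unjustified. A secondary, fixable inaccuracy: since $\hat{m}_n = m_n - n\kappa'(\theta)$ is of order $\log n$, the ballot factor is $(A + x + C\log n + 1)$ rather than $(A+1)(x+1)$; the paper handles this by splitting the sum at $k = \log n$, and the extra contribution, of order $(\log n)^{\alpha+2} n^{-1-\alpha/2}$ after normalisation, vanishes as $n \to \infty$, but your write-up should account for it.
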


\begin{proof}
We first recall from \cite[Theorem 1.1]{Mal16}, that
\[
  \lim_{\epsilon \downarrow 0} \limsup_{n \to \infty} \P(M_n \geq n \kappa'(\theta)- \frac{3}{2\theta} \log n + \epsilon^{-1}) = 0.
\]
Letting $a_n=\frac{1}{\theta}\left(\log n+  \log L\left(\sqrt{n}\right)\right)$, we can rewrite this as
\[
  \lim_{\epsilon \downarrow 0} \limsup_{n \to \infty} \P(M_n \geq m_n-a_n  + \epsilon^{-1}) = 0.
\]
Moreover, we have $\lim_{A\to \infty} \P(\mathcal{S}_A)  = 1$. Therefore it is enough to prove that for all $A > 0$ large enough,
\begin{equation}
  \label{eqn:aimLemma}
  \lim_{\epsilon \downarrow 0} \limsup_{n \to \infty} \P\left(\exists u \in \mathcal{G}_n(A) : \begin{array}{l} m_n - S_u \in [a_n - \epsilon^{-1}, \epsilon n^{1/2}],\\
   S_u + Y_u \geq m_n - B\end{array} \right) = 0.
\end{equation}
Using the Markov inequality, we have
\begin{align*}
  & \P \left(\exists u \in \mathcal{G}_n(A) : m_n - S_u \in [a_n - \epsilon^{-1}, \epsilon n^{1/2}], S_u + Y_u \geq m_n - B \right)\\
  &\qquad\leq  \E\left[ \sum_{u \in \mathcal{G}_n(A)} \ind{m_n - S_u \in [a_n - \epsilon^{-1},\epsilon n^{1/2}]} \nu([m_n - B - S_u, \infty))\right]\\
  &\qquad\leq  2 \E\left[ \sum_{u \in \mathcal{G}_n(A)} \ind{m_n - S_u \in [a_n - \epsilon^{-1},\epsilon n^{1/2}]} L(m_n- S_u) \e^{\theta (S_u + B - m_n)} \right]
\end{align*}
for all $n$ large enough, where we used  the independence between $Y$ and $S$, and the fact that $a_n \to \infty$, therefore we can apply~\eqref{eqn:rvNu} to $m_n - S_u$.

We then use the formula of $m_n$ and the many-to-one lemma to compute
\begin{align*}
  &\E\left[ \sum_{u \in \mathcal{G}_n(A)} \ind{m_n - S_u \in [a_n - \epsilon^{-1},\epsilon n^{1/2}]} L(m_n- S_u) \e^{\theta (S_u - m_n)} \right]\\
 &\qquad=  n^{1/2}\E\left[ \sum_{u \in \mathcal{G}_n(A)} \frac{L(m_n-S_u)}{L(n^{1/2})} \e^{\theta S_u - n \kappa(\theta)} \ind{m_n - S_u \in [a_n -\epsilon^{-1}, \epsilon n^{1/2}]} \right]\\
&\qquad=  n^{1/2}\E\left[ \frac{L(\hat{m}_n-\hat{T}_n)}{L(n^{1/2})} \ind{\hat{m}_n-\hat{T}_n \in [a_n - \epsilon^{-1},\epsilon n^{1/2}], - \hat{T}_j \geq -A, j \leq n} \right],
\end{align*}
where $\hat{T}_k = T_k - k \kappa'(\theta)$ and $\hat{m}_n = m_n - n \kappa'(\theta)$. Let $\rho \in (0,\alpha + 2)$, we define $\mathfrak{L} : x \mapsto x^{\rho - \alpha} L(x)$. Observe that $\mathfrak{L}$ is a regularly varying function at $\infty$ with index $\rho > 0$. Therefore, by the Uniform Convergence Theorem (\citealp[Theorem~1.5.2]{BGT}), for all $\delta>0$, there exists $N_{\delta}$ such that for all $x>N_{\delta}$,
\begin{equation*}
  \left| \frac{L(\lambda x)}{L(x)} -\lambda^{\alpha} \right| = \lambda^{\alpha-\rho}\left| \frac{\mathfrak{L}(\lambda x)}{\mathfrak{L}(x)} -\lambda^{\rho} \right| < \delta\lambda^{\alpha-\rho} \quad\text{ for all $\lambda \in(0,2\epsilon]$.}
\end{equation*}
As a result, for all $n$ large enough, we have
\begin{align*}
  &\E\left[ \frac{L(\hat{m}_n-\hat{T}_n)}{L(n^{1/2})} \ind{\hat{m}_n-\hat{T}_n \in [a_n - \epsilon^{-1},\epsilon n^{1/2}], - \hat{T}_j \geq -A, j \leq n} \right]\\
&\qquad\leq  \E\left[ \left( \frac{\hat{m}_n-\hat{T}_n}{n^{1/2}}\right)^\alpha \ind{\hat{m}_n- \hat{T}_n \in [a_n - \epsilon^{-1},\epsilon n^{1/2}], -\hat{T}_j \geq - A, j \leq n} \right]\\
&\qquad\qquad +  \delta \E\left[ \left( \frac{\hat{m}_n-\hat{T}_n}{n^{1/2}}\right)^{\alpha - \rho} \ind{\hat{m}_n- \hat{T}_n \in [a_n - \epsilon^{-1},\epsilon n^{1/2}], -\hat{T}_j \geq - A, j \leq n} \right].
\end{align*}
We now compute this quantity using the ballot theorem to the centred random walk $(-\hat{T}_n)$ with finite variance.

Using e.g.~\cite[Lemma~4.1]{AiS2}, there exist $C > 0$ and $h > 0$ such that for all $n \in \N$, $a \geq 0$ and $b \geq -a$, we have
\[
  \P( - \hat{T}_n  \in [b,b+h],  -\hat{T}_j \geq - a, j \leq n) \leq C \frac{\left((a+1) \wedge n^{1/2} \right)\left((a+b+1) \wedge n^{1/2}\right)}{n^{3/2}}.
\]
Let $\gamma \in (-2,0)$. For all $n$ large enough, we have
\begin{align*}
  &\E\left[ \left( \hat{m}_n-\hat{T}_n \right)^\gamma \ind{\hat{m}_n- \hat{T}_n \in [a_n - \epsilon^{-1},\epsilon n^{1/2}], -\hat{T}_j \geq - A, j \leq n} \right]\\
&\qquad\leq \sum_{k=1}^{\ceil{\epsilon n^{1/2}/h}} (kh)^\gamma \P(\hat{m}_n-\hat{T}_n \in [kh, (k+1)h], - \hat{T}_j \geq -A, j \leq n)\\
&\qquad\leq \frac{C h^\gamma (A+1)}{n^{3/2}} \sum_{k = 1}^{\ceil{\epsilon n^{1/2}/h}} k^\gamma (A + kh + C'\log n+1)\\
&\qquad\leq \frac{2C h^\gamma (A+1)^2}{n^{3/2}} \sum_{k = 1}^{\ceil{\epsilon n^{1/2}/h}} k^\gamma (kh + C'\log n),
\end{align*}
where $C'$ is chosen so that $|\hat{m}_n| \leq C' \log n$ for all $n$ large enough.
Decomposing this sum for $k \leq \log n$ and $k \geq \log n$, we obtain, for $n$ large enough
\begin{align*}
  &\E\left[ (\hat{m}_n - \hat{T}_n )^{\gamma} \ind{\hat{m}_n-\hat{T}_n \in [a_n - \epsilon^{-1},\epsilon n^{1/2}],  -\hat{T}_j \geq -A, j \leq n} \right]\\
&\qquad\leq  \frac{2C h^\gamma (A+1)^2(C'+h)}{n^{3/2}} \left( (\log n )^{\gamma + 2} + \sum_{k = \ceil{\log n}}^{\ceil{\epsilon n^{1/2}/h}} k^{\gamma +1} \right)\\
&\qquad\leq  \frac{K_\gamma}{n^{3/2}} (\log n )^{\gamma + 2} + \frac{K'_\gamma}{n^{3/2}} \epsilon^{\gamma + 2} n^{\gamma/2 +1},
\end{align*}
for some $K_\gamma,K'_\gamma>0$ since $\gamma  +1 > -1$. As a result, letting $n \to \infty$, we obtain
\begin{align*}
  &\limsup_{n \to \infty}\P\left(\exists u \in \mathcal{G}_n(A) : \begin{array}{l} m_n - S_u \in [a_n - \epsilon^{-1}, \epsilon n^{1/2}],\\
     S_u + Y_u \geq m_n - B\end{array} \right)\\
&\qquad\leq  2\e^{\theta B}\limsup_{n \to \infty} n^{1/2} \E\left[ \frac{L(\hat{m}_n-\hat{T}_n)}{L(n^{1/2})} \ind{\hat{m}_n - \hat{T}_n \in [a_n - \epsilon^{-1},\epsilon n^{1/2}], -\hat{T}_j \geq - A, j \leq n} \right]\\
&\qquad\leq  2\e^{\theta B} K'_{\alpha}  \epsilon^{\alpha + 2} + 2\e^{\theta B} \delta K'_{\alpha-\rho}  \epsilon^{\alpha-\rho+2}.
\end{align*}
Using that $\alpha > \alpha - \rho > -2$, we conclude that~\eqref{eqn:aimLemma} holds, which completes the proof.
\end{proof}

We now turn to the proof of Theorem~\ref{thm:derBrw}.

\begin{proof}[Proof of Theorem~\ref{thm:derBrw}]
For any $n \in \N$ and $\epsilon > 0$, we write
\[
  \tilde{\mathcal{E}}_n^{(\epsilon)} = \sum_{|u|=n} \delta_{S_u + Y_u} \ind{S_u \leq  m_n - \epsilon n^{1/2}}.
\]
Using Lemma~\ref{lem:lemma}, together with the inequality $|\e^{-a} - \e^{-b}| \leq |a-b| \wedge 1$, we observe that for all~$\phi \in \mathcal{T}$,
\begin{align}
  \label{eqn:lemma}
  &\limsup_{\epsilon \downarrow 0} \limsup_{n \to \infty} \left|  \E\left[\e^{-\crochet{\tau_{-m_n}\mathcal{E}_n,\phi}}\right] - \E\left[\e^{-\crochet{\tau_{-m_n}\tilde{\mathcal{E}}_n^{(\epsilon)},\phi}}\right] \right|\\
&\qquad\leq  \lim_{\epsilon \downarrow 0} \limsup_{n \to \infty}  \E \left[\left( \sum_{|u|=n} \phi(S_u+Y_u-m_n)  \ind{S_u \geq  m_n - \epsilon n^{1/2}}    \right)\wedge 1\right]
   = 0.
\end{align}
It is therefore enough to study the asymptotic behaviour of $\E\left[\e^{-\crochet{\tau_{-m_n}\tilde{\mathcal{E}}_n^{(\epsilon)},\phi}}\right]$ to identify the limiting distribution of $\tau_{-m_n} \mathcal{E}_n$.

Let $\phi \in \mathcal{T}$. Using the same computations as in~\eqref{eqn:lapTransform}, we have
\begin{align*}
  \E\left[ \exp\left( - \crochet{  \tau_{-m_n} \tilde{\mathcal{E}}^{(\epsilon)}_n,\phi}\right)\right]
  = \E\left[ \exp\left( - \sum_{|u|=n} g_\phi(S_u-m_n) \ind{S_u \leq m_n - \epsilon n^{1/2}} \right) \right].
\end{align*}
We study the asymptotic behaviour of $\sum_{|u|=n} g_\phi(S_u-m_n) \ind{S_u \leq m_n - \epsilon n^{1/2}}$ as $n \to \infty$ then $\epsilon \downarrow 0$. By Lemma~\ref{lem:estimate}, we get that for all $\delta > 0$, for all  large enough $n$,
\begin{align*}
  &(1 -\delta) c_\phi(\theta)  n^{1/2}  \sum_{|u|=n} \frac{L(m_n-S_u)}{L(n^{1/2})} \e^{\theta S_u - n \kappa(\theta)}\ind{ m_n-S_u \geq \epsilon n^{1/2}}\\
  &\qquad\leq \sum_{|u|=n} g_\phi(S_u-m_n) \ind{ m_n-S_u \geq \epsilon n^{1/2}}\\
  &\qquad\qquad\leq (1 +\delta) c_\phi(\theta)  n^{1/2}  \sum_{|u|=n} \frac{L(m_n-S_u)}{L(n^{1/2})} \e^{\theta S_u - n \kappa(\theta)}\ind{ m_n-S_u \geq \epsilon n^{1/2}}.
\end{align*}

Recall that $L$ is regularly varying with index $\alpha < 0$. We use again the Uniform Convergence Theorem (\citealp[Theorem~1.5.2]{BGT}), yielding
\[
  \lim_{x \to \infty} \sup_{\lambda > \epsilon/2} \left| \frac{L(\lambda x)}{L(x)} - \lambda^\alpha \right| = 0.
\]
As a result, for all $0 < \delta < \epsilon$, for all $n$ large enough, we have
\begin{align*}
   &(1 -\delta) c_\phi(\theta)  n^{1/2}  \sum_{|u|=n} \underline{h}_{\epsilon,\delta}((m_n-S_u)/n^{1/2}) \e^{\theta S_u - n \kappa(\theta)}\\
   &\qquad\leq \sum_{|u|=n} g_\phi(S_u-m_n) \ind{ m_n-S_u \geq \epsilon n^{1/2}}\\
   &\qquad\qquad\leq (1 +\delta) c_\phi(\theta)  n^{1/2}  \sum_{|u|=n} \bar{h}_{\epsilon,\delta}((m_n-S_u)/n^{1/2}) \e^{\theta S_u - n \kappa(\theta)},
\end{align*}
where $\underline{h}_{\epsilon,\delta}$ and $\bar{h}_{\epsilon,\delta}$ are continuous functions such that, for all $x \in \R$,
\begin{align*}
    &(x^\alpha - \delta) \ind{[\epsilon+\delta,\infty)} \leq \underline{h}_{\epsilon,\delta}(x) \leq (x^\alpha - \delta) \ind{[\epsilon,\infty)}  \\
    \text{ and }& (x^\alpha + \delta) \ind{[\epsilon,\infty)} \leq \bar{h}_{\epsilon,\delta}(x) \leq (x^\alpha + \delta) \ind{[\epsilon - \delta,\infty)}.
\end{align*}

As a result, using a combination of~\cite[Theorem~1.2]{Mad16} and~\cite[Theorem~1.1]{AiS}, then letting $\delta\downarrow 0$ we conclude that
\begin{align*}
& \lim_{n \to \infty} \sum_{|u|=n} g_\phi(S_u-m_n) \ind{ m_n-S_u \geq \epsilon n^{1/2}}\\
&\quad = \sqrt{\frac{2}{\pi \kappa''(\theta)}} c_\phi(\theta) Z_\infty \E\left[ \left(R_1 \sqrt{\kappa''(\theta)} \right)^\alpha \ind{R_1 \sqrt{\kappa''(\theta)}  \geq \epsilon} \right] \quad\text{ in probability,}
\end{align*}
where $(R_t, t\in[0,1])$ is a Brownian meander. Since $R_1$ has  Rayleigh distribution, we observe that $\E\left[ \left(R_1\right)^\alpha \right] < \infty$. Now, letting $\epsilon \downarrow 0$, we obtain by monotonicity that
\begin{align*}
  &\lim_{\epsilon \downarrow 0} \lim_{n \to \infty}  \sum_{|u|=n } g_\phi(S_u-m_n) \ind{m_n - S_u \geq \epsilon n^{1/2}}\\
  &\qquad= \sqrt{\frac{2}{\pi \kappa''(\theta)}} c_\phi(\theta) Z_\infty \E\left[ \left(R_1 \sqrt{\kappa''(\theta)} \right)^\alpha \right] \quad\text{ in probability.}
\end{align*}

Finally, we set
\[
  c_2 = \sqrt{\frac{2}{\pi \kappa''(\theta)}} \E\left[ \left(R_1 \sqrt{\kappa''(\theta)} \right)^\alpha \right]  =\sqrt{\frac{2}{\pi \kappa''(\theta)}}\left( 2 \kappa''(\theta) \right)^{\frac{\alpha}{2}} \Gamma\left(\frac{\alpha}{2}+1\right).
\]
We now conclude, by equation~\eqref{eqn:lemma}, that
\begin{align*}
 \E\left[\e^{-c_2 c_\phi(\theta) Z_\infty}\right] = \lim_{\epsilon \downarrow 0} \lim_{n \to \infty}   \E\left[\e^{-\crochet{\tau_{-m_n}\tilde{\mathcal{E}}^{(\epsilon)}_n,\phi}}\right]
 = \lim_{n \to \infty} \E\left[\e^{-\crochet{\tau_{-m_n}\mathcal{E}_n,\phi}}\right].
\end{align*}
Identifying the Laplace transform of the  PPP($c_2  \theta  Z_\infty \e^{-\theta x} \,\dd x$), the proof of Theorem~\ref{thm:derBrw} is now complete.
\end{proof}

\subsection{BRW above the boundary case}
\label{subsec:above}

We prove in this section Theorem~\ref{thm:aboveBoundary} as a consequence of the convergence of the extremal process of the BRW observed by \cite{Mad17}. Recall that by~\cite[Theorem~1.1]{Mad17}, for all $\phi \in \mathcal{T}$, we have
\[
  \lim_{n \to \infty} \E\left[ \e^{- \crochet{\mathcal{Z}_n,\phi}} \right] = \E\left[ \e^{-\crochet{\mathcal{Z}_\infty,\phi}} \right],
\]
where $\mathcal{Z}_\infty$ is the limiting extremal process of the BRW, a decorated, randomly shifted Poisson point process with exponential intensity. Moreover,~\cite[Proposition 2.1]{Mad17} shows that $\sum_{|u|=n} \e^{\theta (S_u - m_n)}$ is tight for all~$\theta > \theta_0$.
Using these two results and assuming~\eqref{eqn:NuBounded}, we show that for all $\phi \in \mathcal{T}$,
 \[
  \lim_{n \to \infty} \E\left[ \e^{- \crochet{\mathcal{Z}_n,g_\phi}} \right] = \E\left[ \e^{-\crochet{\mathcal{Z}_\infty,g_\phi}} \right],
\]
which implies the convergence of the extremal process of the last progeny modified BRW.

\begin{proof}[Proof of Theorem~\ref{thm:aboveBoundary}]
Let $K > 0$, we write $\chi_K$ a function in $\mathcal{T}$ such that
\[
  \indset{[-K,K]} \leq \chi_K \leq \indset{[-2K,2K]}.
\]
Let $\phi \in \mathcal{T}$, using~\eqref{eqn:lapTransform}, we can write for all $n \in \N$,
\begin{align*}
& \left| \E\left[ \e^{-\crochet{\tau_{-m_n}\mathcal{E}_n,\phi}}  \right] - \E\left[ \e^{-\crochet{\mathcal{Z}_\infty,g_\phi}} \right] \right|\\
&\qquad =  \left|  \E\left[ \e^{ - \sum_{|u|=n} g_\phi(S_u - m_n) }\right] - \E\left[ \e^{-\crochet{\mathcal{Z}_\infty,g_\phi}} \right]  \right|
 \leq I_1(n,K) + I_2(n,K) + I_3(K),
\end{align*}
where we have set
\begin{align*}
  I_1(n,K) &= \left| \E\left[ \e^{ - \sum_{|u|=n} g_\phi(S_u - m_n) } \right] - \E\left[ \e^{ - \sum_{|u|=n} \chi_K g_\phi(S_u - m_n) } \right] \right|\\
  I_2(n,K) &= \left| \E\left[ \e^{ - \sum_{|u|=n} \chi_K g_\phi(S_u - m_n) } \right] - \E\left[ \e^{-\crochet{\mathcal{Z}_\infty,\chi_K g_\phi}} \right] \right|\\
  I_3(K) &= \left| \E\left[ \e^{-\crochet{\mathcal{Z}_\infty,\chi_K g_\phi}} \right] - \E\left[ \e^{-\crochet{\mathcal{Z}_\infty,g_\phi}} \right] \right|.
\end{align*}

Observe that $\chi_K g_\phi \in \mathcal{T}$. Therefore, by~\cite[Theorem~1.1]{Mad17}, we know that for all $K>0$, $\lim_{n \to \infty} I_2(n,K)= 0$. Additionally, $\lim_{K \to \infty} I_3(K) = 0$ by Lebesgue's dominated convergence theorem. To complete the proof of Theorem~\ref{thm:aboveBoundary}, we now bound $I_1(n,K)$ in $n$.

Let $\theta_1\in(\theta_0,\theta)$, using the inequality $|\e^{-a} - \e^{-b}| \leq |a-b| \wedge 1$ for all $a,b \geq 0$, we have
\begin{align*}
  I_1(n,K) & \leq \E\left[ \left(\sum_{|u|=n} (1-\chi_K) g_\phi(S_u - m_n)\right) \wedge 1 \right]\\
  &\leq \E\left[ \left(\sum_{|u|=n}  g_\phi(S_u - m_n)\ind{|S_u-m_n| \geq K} \right) \wedge 1 \right]\\
  &\leq \E\left[ \left(\sum_{|u|=n}  g_\phi(S_u - m_n)\ind{S_u - m_n \leq -K} \right) \wedge 1 \right]\\
  &\qquad \qquad + \E\left[ \left(\sum_{|u|=n}  g_\phi(S_u - m_n)\ind{S_u-m_n \geq K} \right) \wedge 1 \right],
\end{align*}
 by the sub-additivity $(a+b)\wedge1 \leq (a\wedge1)+(b\wedge1)$ for all $a,b \geq 0$. We first observe that
\[
  \E\left[ \left(\sum_{|u|=n}  g_\phi(S_u - m_n)\ind{S_u-m_n \geq K} \right) \wedge 1 \right]  \leq \P(M_n-m_n  \geq  K),
\]
which converges to $0$ uniformly in $n$ as $K \to \infty$, by tightness of $(M_n -m_n)$. Moreover, using Lemma~\ref{lem:estimateBound}, we have
\begin{align*}
  &\E\left[ \left(\sum_{|u|=n}  g_\phi(S_u - m_n)\ind{S_u-m_n \leq - K} \right) \wedge 1 \right]\\
   &\qquad \leq  \E\left[ \left( C'\sum_{|u|=n} \e^{\theta (S_u - m_n)} \ind{S_u-m_n \leq -K} \right) \wedge 1 \right]\\
   &\qquad \leq C' \E\left[ \ind{\sum_{|u|=n} \e^{\theta_1(S_u-m_n)} \leq K} \sum_{|u|=n} \e^{\theta (S_u - m_n)} \ind{S_u-m_n \leq -K} \right]  +\P\left(\sum_{|u|=n} \e^{\theta_1(S_u-m_n)} \geq K\right),
\end{align*}
for an arbitrary $\theta_1 \in (\theta_0,\theta)$. As a result,
\begin{align*}
\E\left[ \left(\sum_{|u|=n}  g_\phi(S_u - m_n)\ind{S_u-m_n \leq - K} \right) \wedge 1 \right]
\leq C' K \e^{-K(\theta-\theta_1)} + \P\left(\sum_{|u|=n} \e^{\theta_1(S_u-m_n)} \geq K\right).
\end{align*}
Using~\cite[Proposition 2.1]{Mad17}, we observe that $\sum_{|u|=n} \e^{\theta_1(S_u-m_n)}$  is  tight, and therefore we conclude that
\[
  \lim_{K\to \infty} \sup_{n \in \N} I_1(n,K) = 0.
\]

Finally,
\begin{align*}
  \lim_{n \to \infty} \E\left[ \e^{-\crochet{\tau_{-m_n}\mathcal{E}_n,\phi}}  \right]& =\E\left[ \e^{-\crochet{\mathcal{Z}_\infty,g_\phi}} \right]
  = \E\left[ \exp\left( - \sum_{i \in \N} \phi(z_i + Y_i) \right)\right],
\end{align*}
which completes the proof.
\end{proof}

%%%%%%%%%%%%%%%%%%%%%%%%%%%%%%%%%%%%%%

\section*{Acknowledgements}
The authors would like to thank the anonymous referees for their careful reading and detailed comments, which have helped to improve the paper.


\begin{thebibliography}{20}
\providecommand{\natexlab}[1]{#1}
\providecommand{\url}[1]{\texttt{#1}}
\providecommand{\urlprefix}{URL }
\expandafter\ifx\csname urlstyle\endcsname\relax
  \providecommand{\doi}[1]{doi:\discretionary{}{}{}#1}\else
  \providecommand{\doi}{doi:\discretionary{}{}{}\begingroup
  \urlstyle{rm}\Url}\fi

\bibitem[{Addario-Berry and Reed(2009)}]{ABR09}
Addario-Berry, L. and Reed, B.
\newblock Minima in branching random walks.
\newblock \emph{Ann. Probab.}, \textbf{37}~(3), 1044--1079 (2009).
\newblock \doi{10.1214/08-AOP428}.

\bibitem[{A\"id\'ekon(2013)}]{Aid}
A\"id\'ekon, E.
\newblock Convergence in law of the minimum of a branching random walk.
\newblock \emph{Ann. Probab.}, \textbf{41}~(3A), 1362--1426 (2013).
\newblock \doi{10.1214/12-AOP750}.

\bibitem[{A\"id\'ekon and Shi(2010)}]{AiS2}
A\"id\'ekon, E. and Shi, Z.
\newblock Weak convergence for the minimal position in a branching random walk:
  a simple proof.
\newblock \emph{Period. Math. Hungar.}, \textbf{61}~(1-2), 43--54 (2010).
\newblock \doi{10.1007/s10998-010-3043-x}.

\bibitem[{A\"id\'ekon and Shi(2014)}]{AiS}
A\"id\'ekon, E. and Shi, Z.
\newblock The {S}eneta-{H}eyde scaling for the branching random walk.
\newblock \emph{Ann. Probab.}, \textbf{42}~(3), 959--993 (2014).
\newblock \doi{10.1214/12-AOP809}.

\bibitem[{Alsmeyer and Iksanov(2009)}]{AlI09}
Alsmeyer, G. and Iksanov, A.
\newblock A log-type moment result for perpetuities and its application to
  martingales in supercritical branching random walks.
\newblock \emph{Electron. J. Probab.}, \textbf{14}, no. 10, 289--312 (2009).
\newblock \doi{10.1214/EJP.v14-596}.

\bibitem[{Bandyopadhyay and Ghosh(2025)}]{BG23}
Bandyopadhyay, A. and Ghosh, P.~P.
\newblock Right-most position of a last progeny modified branching random walk.
\newblock \emph{J. Theoret. Probab.}, \textbf{38}~(2), Paper No. 34, 33 (2025).
\newblock \doi{10.1007/s10959-025-01404-1}.

\bibitem[{Biggins(1976)}]{Big2}
Biggins, J.~D.
\newblock The first- and last-birth problems for a multitype age-dependent
  branching process.
\newblock \emph{Advances in Appl. Probability}, \textbf{8}~(3), 446--459
  (1976).
\newblock \doi{10.2307/1426138}.

\bibitem[{Biggins(1977)}]{Big}
Biggins, J.~D.
\newblock Martingale convergence in the branching random walk.
\newblock \emph{J. Appl. Probability}, \textbf{14}~(1), 25--37 (1977).
\newblock \doi{10.2307/3213258}.

\bibitem[{Bingham et~al.(1987)Bingham, Goldie, and Teugels}]{BGT}
Bingham, N.~H., Goldie, C.~M., and Teugels, J.~L.
\newblock \emph{Regular variation}, volume~27 of \emph{Encyclopedia of
  Mathematics and its Applications}.
\newblock Cambridge University Press, Cambridge (1987).
\newblock ISBN 0-521-30787-2.
\newblock \doi{10.1017/CBO9780511721434}.

\bibitem[{Bramson and Zeitouni(2009)}]{BraZ}
Bramson, M. and Zeitouni, O.
\newblock Tightness for a family of recursion equations.
\newblock \emph{Ann. Probab.}, \textbf{37}~(2), 615--653 (2009).
\newblock \doi{10.1214/08-AOP414}.

\bibitem[{Chen(2015)}]{Che15}
Chen, X.
\newblock A necessary and sufficient condition for the nontrivial limit of the
  derivative martingale in a branching random walk.
\newblock \emph{Adv. in Appl. Probab.}, \textbf{47}~(3), 741--760 (2015).
\newblock \doi{10.1239/aap/1444308880}.

\bibitem[{Hu and Shi(2009)}]{HuS09}
Hu, Y. and Shi, Z.
\newblock Minimal position and critical martingale convergence in branching
  random walks, and directed polymers on disordered trees.
\newblock \emph{Ann. Probab.}, \textbf{37}~(2), 742--789 (2009).
\newblock \doi{10.1214/08-AOP419}.

\bibitem[{Kowalski(2024)}]{Kow23}
Kowalski, K.
\newblock Branching random walks with regularly varying perturbations.
\newblock \emph{ESAIM Probab. Stat.}, \textbf{28}, 379--391 (2024).
\newblock \doi{10.1051/ps/2024014}.

\bibitem[{Lyons(1997)}]{Lyo95}
Lyons, R.
\newblock A simple path to {B}iggins' martingale convergence for branching
  random walk.
\newblock In \emph{Classical and modern branching processes ({M}inneapolis,
  {MN}, 1994)}, volume~84 of \emph{IMA Vol. Math. Appl.}, pp. 217--221.
  Springer, New York (1997).
\newblock ISBN 0-387-94872-4.
\newblock \doi{10.1007/978-1-4612-1862-3\_17}.

\bibitem[{Madaule(2016)}]{Mad16}
Madaule, T.
\newblock First order transition for the branching random walk at the critical
  parameter.
\newblock \emph{Stochastic Process. Appl.}, \textbf{126}~(2), 470--502 (2016).
\newblock \doi{10.1016/j.spa.2015.09.008}.

\bibitem[{Madaule(2017)}]{Mad17}
Madaule, T.
\newblock Convergence in law for the branching random walk seen from its tip.
\newblock \emph{J. Theoret. Probab.}, \textbf{30}~(1), 27--63 (2017).
\newblock \doi{10.1007/s10959-015-0636-6}.

\bibitem[{Maillard and Mallein(2021)}]{MaM}
Maillard, P. and Mallein, B.
\newblock On the branching convolution equation
  {$\mathcal{E}=\mathcal{Z}\circledast \mathcal{E}$}.
\newblock \emph{Electron. Commun. Probab.}, \textbf{26}, Paper No. 59, 12
  (2021).
\newblock \doi{10.1214/21-ecp431}.

\bibitem[{Mallein(2016)}]{Mal16}
Mallein, B.
\newblock Asymptotic of the maximal displacement in a branching random walk.
\newblock \emph{Grad. J. Math.}, \textbf{1}~(2), 92--104 (2016).

\bibitem[{Shi(2015)}]{Shi}
Shi, Z.
\newblock \emph{Branching random walks}, volume 2151 of \emph{Lecture Notes in
  Mathematics}.
\newblock Springer, Cham (2015).
\newblock ISBN 978-3-319-25371-8; 978-3-319-25372-5.
\newblock \doi{10.1007/978-3-319-25372-5}.
\newblock Lecture notes from the 42nd Probability Summer School held in Saint
  Flour, 2012, \'Ecole d'\'Et\'e{} de Probabilit\'es de Saint-Flour.
  [Saint-Flour Probability Summer School].

\bibitem[{Subag and Zeitouni(2015)}]{SuZ15}
Subag, E. and Zeitouni, O.
\newblock Freezing and decorated {P}oisson point processes.
\newblock \emph{Comm. Math. Phys.}, \textbf{337}~(1), 55--92 (2015).
\newblock \doi{10.1007/s00220-015-2303-2}.

\end{thebibliography}
\end{document}